\newtheorem{theorem}{Theorem}[section]
\newtheorem{lemma}[theorem]{Lemma}
\newtheorem{Definition}[theorem]{Definition}
\newtheorem{proposition}[theorem]{Proposition}
\newenvironment{proof}[1][]{\textbf{Proof#1.}}{$\Box$ }
\DeclareMathOperator{\id}{id}
\newcommand{\End}{\operatorname{End}}
\newcommand{\onlineVersionOnly}[1]{#1}
\title{Computing parabolically induced embeddings of semisimple complex Lie algebras in Weyl algebras}
\author{Todor Milev  }
\begin{document}

\maketitle

\begin{abstract}
An arbitrary proper parabolic subalgebra ${\mathfrak p}$ of a simple complex Lie algebra ${\mathfrak g}$ induces an embedding ${\mathfrak g}\hookrightarrow \mathbb W_n$, and more generally an embedding ${\mathfrak g}\hookrightarrow \mathbb W_n\otimes \End V$, where $\mathbb W_n$ is the Weyl algebra in $n$ variables, $n$ is the dimension of the nilradical of ${\mathfrak p}$, and $V$ is an arbitrary ${\mathfrak p}$-module. We give an elementary proof of this known fact, report on a computer program computing the embeddings, and tabulate exceptional Lie algebra embeddings $G_2 \hookrightarrow \mathbb W_5$,  $F_4 \hookrightarrow \mathbb W_{15}$, $E_6 \hookrightarrow \mathbb W_{16}$, $E_7 \hookrightarrow\mathbb W_{27}$, $E_8 \hookrightarrow \mathbb W_{57}$ arising in this fashion.
\\~\\
\textbf{Keywords:} Generalized Verma modules, Exceptional Lie algebras, realization of exceptional Lie algebra, Weyl algebra
\end{abstract}

\section{Introduction}
Let ${\mathfrak g}$ be a semisimple Lie algebra. In this article we present a computational technique for realizing generalized Verma ${\mathfrak g}$-modules via differential operators. For simple ${\mathfrak g}\neq sp(2n)$, the resulting differential operator realizations of ${\mathfrak g}$ provide candidates for differential operator realizations with minimal number of variables (see \cite{Joseph:MinimalRealizations}). At the same time, these differential operator realizations serve as the first computational step in the ``$\mathcal F$-method'' for computing generalized Verma module branching rules. 

Our computational technique is contained in the proof of Proposition \ref{propGenVermaIsInWeylPlusEnd}. The proposition is known, see \cite{Berline:algebrasDiffOps}. Our contribution consists in giving a computational proof, well suited for computer realization, and writing a computer program to tabulate the corresponding exceptional Lie algebra differential operator realizations (Theorem \ref{thExceptionalsinWn} and Table \ref{tableTheTable} below). 
 
Let ${\mathfrak p}', {\mathfrak n}'\subset {\mathfrak g}$ be two Lie subalgebra of ${\mathfrak g}$ such that ${\mathfrak n}$ is nilpotent and ${\mathfrak g}$ is the direct vector space sum of ${\mathfrak n}'$ and ${\mathfrak p}'$. Let $V$ be a representation of ${\mathfrak p}'$. It is then known that there exists a Lie algebra homomorphism mapping ${\mathfrak g}$ to the Lie algebra $\mathbb W_n \otimes \End V$, where $\mathbb W_n$ is the Weyl algebra of differential operators with polynomial coefficients in $n=\dim{\mathfrak n}' $ variables.  This is explained, for example, in \cite{Joseph:MinimalRealizations} in the remarks before Lemma 3.3 there. In the particular case that $V$ is a trivial representation, this map may be identified with a homomorphism from ${\mathfrak g}$ into $\mathbb W_n$.

Suppose now that ${\mathfrak g}$ is a simple complex Lie algebra. Then a homomorphism of the above form must be an embedding. In the case that ${\mathfrak p}'={\mathfrak p}$ is a parabolic subalgebra with reductive Levi part ${\mathfrak l}$ and ${\mathfrak n}'={\mathfrak n}_-$ is the subalgebra opposite to the nilradical of ${\mathfrak p}$, in Proposition \ref{propGenVermaIsInWeylPlusEnd} we construct the corresponding embedding  
\begin{equation}
\label{eqEmbeddingOfSimpleLieAlg}
{\mathfrak g}\hookrightarrow\mathbb W_n\otimes \End V_\lambda({\mathfrak l})\quad .
\end{equation}
We report a computer program\footnote{the program has free and open source code and an online interface} written by the author computes the embeddings \eqref{eqEmbeddingOfSimpleLieAlg}, as described in Section \ref{secComputerWork}. We have provided an online interface to our program, \cite{Milev:vpf}, which allows an user to compute arbitrary examples of \eqref{eqEmbeddingOfSimpleLieAlg} within computational range.  The dimension of $V_\lambda({\mathfrak l})$ is restricted by computational power; $\dim V_\lambda({\mathfrak l})\leq 1000$ is within practical limits on our system (when running on a 32 bit personal computer). The program is written in the programming language C++, within the project ``vector partition function''. Table \ref{tableTheTable} was generated by our C++ program. In Section \ref{secTheConstructionCaseG2} we illustrate the details of the construction for $G_2$, as printed out by our program.

\subsection{Relationship to minimal realizations of simple Lie algebras}
 
If we specialize our computational technique to the case of parabolic subalgebra of maximal dimension and $\lambda=0$, we obtain a case that is key to the study of minimal embeddings of ${\mathfrak g}\neq sp(2n)$ into Weyl algebras. We recall that an embedding of a Lie algebra in the Weyl algebra of $n$ variables $\mathbb W_n$ is minimal if it uses the minimal number of variables $n$. In the case of an exceptional Lie algebra and proper parabolic subalgebra of maximal dimension, we get the embeddings 
\begin{equation}\label{eqNumberOfVariablesNeededForExceptionals}
E_8\hookrightarrow \mathbb W_{57}, E_7 \hookrightarrow \mathbb W_{27}, E_6\hookrightarrow \mathbb W_{16}, F_4\hookrightarrow \mathbb W_{15}, G_2\hookrightarrow \mathbb W_{5}\quad .
\end{equation}

\cite{Joseph:MinimalRealizations} proves that a minimal realization $sl(n+1)\hookrightarrow \mathbb W_n$ is achieved by the construction presented here and is not achieved for $sp(2n)$ (for $sp(2n)$ \cite{Joseph:MinimalRealizations} proves the minimal realization is the Shale-Weil representation). If we change the problem to allow localizations $\overline{\mathbb W}_k$ of $\mathbb W_k$, \cite{Joseph:MinDimNilpotentOrbit}, \cite{Joseph:MinimalRealizations} solve the problem of minimal realizations in $\overline{\mathbb W}_k$. Except for $sl(n+1)$, the realizations in $\overline{\mathbb W}_k$ use a smaller number of variables  than the construction discussed in the present paper. However, if using $\mathbb W_n$ rather than localizations, \cite{Joseph:MinDimNilpotentOrbit}, \cite{Joseph:MinimalRealizations} do not resolve the question of whether $so(n)\hookrightarrow \mathbb W_{n-2}$ and the embeddings \eqref{eqNumberOfVariablesNeededForExceptionals} are minimal realizations; up to our knowledge, this remains an open question.

Embeddings \eqref{eqNumberOfVariablesNeededForExceptionals} are presented in the following theorem.
\begin{theorem}\label{thExceptionalsinWn}
Let ${\mathfrak g}\subset \mathbb W_n\otimes \id $ be the Lie algebra generated by the differential operators indicated in the last column of Table \ref{tableTheTable}, where $n$ is the number indicated in the second column. Then ${\mathfrak g}$ is isomorphic to the exceptional Lie algebra indicated in the first column of the table.
\end{theorem}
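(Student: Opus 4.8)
The plan is to obtain Theorem \ref{thExceptionalsinWn} as the special case of Proposition \ref{propGenVermaIsInWeylPlusEnd} in which ${\mathfrak p}$ is a proper parabolic subalgebra of maximal dimension and $\lambda=0$. When $\lambda=0$ the ${\mathfrak l}$-module $V_0({\mathfrak l})$ is one-dimensional, so $\End V_0({\mathfrak l})=\mathbb C\,\id$ and the embedding \eqref{eqEmbeddingOfSimpleLieAlg} becomes a homomorphism $\rho\colon{\mathfrak g}\to\mathbb W_n\otimes\id$. Here $n=\dim{\mathfrak n}_-$ is the number of positive roots of ${\mathfrak g}$ minus the number of positive roots of the Levi subalgebra of maximal dimension, obtained by deleting the appropriate node of the Dynkin diagram; the counts $6-1$, $24-9$, $36-20$, $63-36$, $120-63$ give $n=5,15,16,27,57$ for $G_2,F_4,E_6,E_7,E_8$, matching the second column of Table \ref{tableTheTable}.

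I would then note that $\rho$ is injective: Proposition \ref{propGenVermaIsInWeylPlusEnd} already produces an embedding, and in any case $\ker\rho$ is an ideal of the simple Lie algebra ${\mathfrak g}$, hence $0$ or ${\mathfrak g}$, and $\rho\neq 0$. Consequently $\rho({\mathfrak g})\cong{\mathfrak g}$. By construction, the differential operators listed in the last column of Table \ref{tableTheTable} are exactly the images under $\rho$ of a set of Lie-algebra generators of ${\mathfrak g}$ — the Chevalley generators $e_i,f_i$ — computed by running the algorithm in the proof of Proposition \ref{propGenVermaIsInWeylPlusEnd} for the chosen Cartan subalgebra, simple roots, and ordered basis of ${\mathfrak n}_-$. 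Since $\{e_i,f_i\}$ generates ${\mathfrak g}$ and $\rho$ is a homomorphism, the Lie subalgebra of $\mathbb W_n\otimes\id$ generated by those operators is precisely $\rho({\mathfrak g})$, which is isomorphic to the exceptional Lie algebra in the first column. As independent checks one may verify directly that the tabulated operators satisfy the Serre relations of the corresponding Cartan matrix, or that the Lie algebra they generate has dimension $14,52,78,133,248$ and the expected root system.

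The mathematical content of the argument is therefore entirely contained in Proposition \ref{propGenVermaIsInWeylPlusEnd}; what I expect to be the real obstacle is not logical but computational, namely carrying out the PBW reorderings and iterated bracket computations in $U({\mathfrak g})$ needed to write $\rho(e_i),\rho(f_i)$ explicitly as elements of $\mathbb W_n$. For $F_4$ and the $E$-series this is well beyond what can reasonably be done by hand, which is why Table \ref{tableTheTable} was generated by the author's C++ program (Section \ref{secComputerWork}); the $G_2$ case is small enough to carry out and check directly, as in Section \ref{secTheConstructionCaseG2}. Accordingly, the proof reduces to trusting — or independently re-verifying, e.g.\ via the Serre-relation check above — the explicit operators recorded in the table.
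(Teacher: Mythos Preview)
Your proposal is correct and takes essentially the same approach as the paper: the paper's proof is the single sentence ``We run the algorithm described in Section \ref{secAlgorithmDescription} as implemented in \cite{Milev:vpf} to generate Table \ref{tableTheTable},'' and you have simply spelled out the logical structure behind that sentence (injectivity via simplicity, generation by the images of the simple Chevalley generators, the computation of $n$). Your suggested consistency checks parallel the paper's own dimension check at the end of Section \ref{secComputerWork}.
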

While the number of variables in the realizations \eqref{eqNumberOfVariablesNeededForExceptionals} were stated in \cite{Joseph:MinimalRealizations} and \cite{Joseph:MinDimNilpotentOrbit}, the present article is the only one (up to our knowledge) to explicitly present differential operator realizations of the exceptional Lie algebras $F_4, E_6, E_7, E_8$ in the stated number of variables.

We note that the results of Section \ref{secTheoretical} are valid over a field of arbitrary characteristic - the only division operation required by Section \ref{secTheoretical} is the division by a monic polynomial in \eqref{eqPolyDivForWeyl}. So long as the structure constants with respect to a Chevalley-Weyl basis of the Lie algebra do not vanish (which does happen in characteristic 2 and 3  for $G_2$ and in characteristic 2 for $F_4$), all algorithms described in Section \ref{secComputerWork} carry through directly. Furthermore, the Lie algebra generated over $\mathbb Z$ by the differential operators in Theorem \ref{thExceptionalsinWn} is spanned by differential operators with integral coefficients. Clearly those involve only finitely many primes, and therefore Theorem \ref{thExceptionalsinWn} remains valid over all fields of sufficiently large finite characteristic.

\subsection{Relationship to the ``$\mathcal F$-method''}
The technique presented here was originally intended as a tool for solving the generalized Verma module branching problem over a semisimple subalgebra as formulated in \cite{MilevSomberg:BranchingShort}. Our technique provides the starting differential operators used in the ``$\mathcal F$-method'' for computing branchings of generalized Verma modules, see  \cite[\S 2.4]{OerstedKobayashiSombergSoucek:Fmethod}. The software described in the present paper was used in part of the computation of certain families of generalized Verma module branchings of $so(7)$ over $G_2$, \cite{MilevSomberg:so7g2Fmethod}.


\textbf{Acknowledgment}: The author gratefully acknowledges the support by the Czech Grant Agency through the grant GA CR P 201/12/G028. 

\section{Notation and preliminaries}\label{secNotationPreliminaries}
${\mathfrak g}$ denotes a semisimple Lie algebra over the complex numbers. Let ${\mathfrak h}\subset {\mathfrak g}$ be a Cartan subalgebra, and let ${\mathfrak g}={\mathfrak h}\oplus \bigoplus_{\alpha\in \Delta({\mathfrak g})} {\mathfrak g}^{\alpha}$ be the corresponding root space decomposition of ${\mathfrak g}$. Let $g_\alpha\in {\mathfrak g}^{\alpha}$ be a Chevalley-Weyl basis of ${\mathfrak g}$ corresponding to this decomposition. In a Chevalley-Weyl basis, the structure constants of ${\mathfrak g}$ are integral. 

We denote by ${\mathfrak b},{\mathfrak p}$ respectively a fixed Borel and a fixed parabolic subalgebra, such that ${\mathfrak h}\subset{\mathfrak b}\subset{\mathfrak p} $. We denote by ${\mathfrak l}$ the reductive Levi part of ${\mathfrak p}$ that contains ${\mathfrak h}$, by ${\mathfrak n}$ the nilradical of ${\mathfrak p}$ and by ${\mathfrak n}_-$ the subalgebra generated by the root spaces opposite to the root spaces that generate ${\mathfrak n}$. 

For a dominant integral weight $\lambda\in {\mathfrak h}^*$ we denote by $V_\lambda({\mathfrak l})$ the irreducible finite-dimensional representation of ${\mathfrak l}$ with highest weight $\lambda$. We equip $V_\lambda({\mathfrak l})$ with trivial action of ${\mathfrak n}$ and denote by $M_\lambda({\mathfrak g}, {\mathfrak p})$ the corresponding generalized Verma module induced by the action of ${\mathfrak p}$ on $V_\lambda({\mathfrak l})$, i.e., 
\[
M_\lambda({\mathfrak g},{\mathfrak p}):= U({\mathfrak g})\otimes_{U({\mathfrak p})} V_\lambda({\mathfrak l})\quad .
\]

The PBW theorem implies the vector space isomorphism
\begin{equation}\label{eqGenVermaIsUnTimesV}
M_\lambda({\mathfrak g},{\mathfrak p}) \simeq U({\mathfrak n}_-) \otimes V_{\lambda}({\mathfrak l})\quad .
\end{equation}

Let $\mathbb W_n$ denote the Weyl algebra in $n $ variables over $\mathbb Q$ where $\dim{\mathfrak n}=n$. We denote its generators by $\partial_1,\dots, \partial_n $, $x_1, \dots, x_n$, and as usual we define its relations to be those of an associative algebra with identity together with
\[
\begin{array}{rcl}
x_i x_j&=&x_jx_i, \\
\partial_i\partial_j&=&\partial_j\partial_i,\\
\partial_j x_i -x_i\partial_j&=&[\partial_j, x_i]=\delta_{ij}:=\left\{\begin{array}{ll}1 & \mathrm {if~} i=j\\0&\mathrm{otherwise.} \end{array} \right.
\end{array}
\] 
Denote by $\mathbb S_n$ the symmetric algebra generated by $1, x_1,\dots, x_n$. Then $\mathbb S_n$ is a $\mathbb W_n$-module under the action $\cdot$ given by applying the differential operators from $\mathbb W_n$ on the elements of $\mathbb S_n$, i.e.,
\[
\begin{array}{rcl}
\partial_i\cdot x_j&:=&\delta_{ij}\\
x_j\cdot x_i&:=&x_jx_i\quad .
\end{array}
\]

\section{Differential operator realizations of generalized Verma modules}\label{secTheoretical}
The statements in this section are contained in essence in \cite{Berline:algebrasDiffOps}. We present however our own constructive proofs, as they are the basis of our computer realization of the differential operator realizations of generalized Verma modules. 



\begin{lemma}\label{leCommuteBmaroundA} Let $a,b \in U({\mathfrak g})$. Then
\[
\begin{array}{rcl}
a b^{m}&=& \displaystyle\sum_{k=0}^{m} \binom{m}{k}(-1)^k b^{m-k}(\mathrm{ad} b)^k (a)\\
b^{m} a&=& \displaystyle\sum_{k=0}^{m} \binom{m}{k}(\mathrm{ad} b)^k (a)b^{m-k}
\quad .
\end{array}
\]
\end{lemma}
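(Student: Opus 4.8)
The plan is to prove both identities by induction on $m$, treating them in parallel since they are mirror images of each other (one can be obtained from the other by the anti-automorphism of $U(\mathfrak g)$, but it is cleaner to just run the induction directly). I would start with the second identity, $b^m a = \sum_{k=0}^m \binom{m}{k}(\operatorname{ad} b)^k(a) b^{m-k}$. The base case $m=0$ is trivial, and the case $m=1$ is simply the defining rearrangement $ba = ab + [b,a] = a b + (\operatorname{ad} b)(a)$, which is where the single appearance of $\operatorname{ad}$ comes from.

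For the inductive step, I would assume the formula for $m$ and multiply on the left by $b$:
\[
b^{m+1} a = b \sum_{k=0}^m \binom{m}{k}(\operatorname{ad} b)^k(a)\, b^{m-k} = \sum_{k=0}^m \binom{m}{k}\, b\,(\operatorname{ad} b)^k(a)\, b^{m-k}.
\]
Then apply the $m=1$ case to each term $b\,(\operatorname{ad} b)^k(a)$, writing it as $(\operatorname{ad} b)^k(a)\, b + (\operatorname{ad} b)^{k+1}(a)$. Substituting and collecting the coefficient of $(\operatorname{ad} b)^j(a) b^{m+1-j}$ gives $\binom{m}{j} + \binom{m}{j-1}$, which is $\binom{m+1}{j}$ by Pascal's rule. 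This closes the induction. The first identity is handled symmetrically: multiply the inductive hypothesis $ab^m = \sum_k \binom{m}{k}(-1)^k b^{m-k}(\operatorname{ad} b)^k(a)$ on the right by $b$ and use the $m=1$ version $c\, b = b\, c - (\operatorname{ad} b)(c)$ applied to $c = (\operatorname{ad} b)^k(a)$, again invoking Pascal's rule to recombine the binomial coefficients; the sign $(-1)^k$ is exactly what makes the recursion consistent.

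There is really no serious obstacle here — the main point requiring a little care is the index bookkeeping in the inductive step: after applying the $m=1$ relation termwise, one gets two sums, one indexed so that the $\operatorname{ad}$-power is $k$ and one so that it is $k+1$, and these must be reindexed to a common summation variable before Pascal's rule can be applied. One should also note that the identities hold in $U(\mathfrak g)$ for arbitrary associative-algebra elements — nothing about the Lie structure is used beyond $[b,a]=ba-ab=(\operatorname{ad} b)(a)$ — so in fact the lemma is a statement about any associative algebra, which is worth remarking since it will be applied with $b$ ranging over elements of $\mathfrak n_-$ and $a$ an arbitrary element of $U(\mathfrak g)$.
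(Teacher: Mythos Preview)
Your induction argument is correct and complete. However, the paper takes a different, shorter route (attributed to Dixmier): it introduces the left and right multiplication operators $L_b$ and $R_b$ on $U(\mathfrak g)$, observes that they commute and that $\operatorname{ad} b = L_b - R_b$, and then simply applies the Newton binomial formula to $L_b^m = (R_b + \operatorname{ad} b)^m$ and $R_b^m = (L_b - \operatorname{ad} b)^m$ evaluated at $a$. Your proof unwinds exactly the same combinatorics by hand, with Pascal's rule playing the role that the binomial expansion plays in the paper's argument; the paper's version is more concise and makes the structural reason for the binomial coefficients transparent, while yours is self-contained and avoids introducing the auxiliary operators. Both proofs work in any associative algebra, as you note.
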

\begin{proof} (Dixmier)
Let $L_b$ and $R_b$ stand respectively for left and right multiplication by $b$ in $U({\mathfrak g})$. As $\mathrm{ad} b=L_b-R_b $ and $L_b$ and $R_b$ commute, the lemma follows from the Newton binomial formula.
\end{proof}
\begin{Definition}\label{defBcompatible}
Let $b_1,\dots, b_n$ be integers. We say that a polynomial $p(a_1,\dots, a_n)$ is $(b_1,\dots, b_n)$-compatible if  $p(a_1,\dots, a_n)$ is divisible by $a_k(a_k-1)\dots (a_k-b_k+1)$ whenever $b_k\geq 0$.
\end{Definition} 
The main motivation for the definition of $(b_1,\dots, b_n)$-compatible polynomials is the following lemma.
\begin{lemma}\label{leWriteAsDiffOperator}
Let $I$ be a finite indexing set. For each $i\in I$, let $(b_{i1}, \dots, b_{in} )$ be an $n$-tuple of integers  and $p_{i}(a_1,\dots, a_n)$ be a $(b_{i1}, \dots, b_{in} )$-compatible polynomial (Definition \ref{defBcompatible}). Suppose $g:\mathbb S_n\to\mathbb S_n$ is a linear operator such that
\[
g(x_1^{a_1}\dots x_n^{a_n})= \sum_{i} p_i(a_1, \dots, a_n)x_1^{a_1-b_{i1}} \dots x_{n}^{a_n-b_{in}}\quad ,
\]
for all monomials $x_1^{a_1}\dots x_n^{a_n}$. Then there exists an unique element $\omega\in \mathbb W_n$ such that the action of $g$ on $\mathbb S_n$ equals the $\cdot$-action of $\omega$ on $\mathbb S_n$. 
\end{lemma}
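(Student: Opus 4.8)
The plan is to write down the candidate $\omega$ explicitly and then check that its $\cdot$-action reproduces $g$. First I would reduce to the case of a single summand: since both $g$ and the $\cdot$-action are linear and $\mathbb W_n$ is closed under addition, it suffices to produce, for each $i\in I$, an element $\omega_i\in\mathbb W_n$ whose $\cdot$-action sends $x_1^{a_1}\cdots x_n^{a_n}$ to $p_i(a_1,\dots,a_n)\,x_1^{a_1-b_{i1}}\cdots x_n^{a_n-b_{in}}$, and then set $\omega=\sum_i\omega_i$. Uniqueness is easy and I would dispose of it first: if $\omega\in\mathbb W_n$ acts as $0$ on all of $\mathbb S_n$, then writing $\omega$ in normal-ordered form $\sum c_{\beta\gamma}x^\beta\partial^\gamma$ and applying it to monomials of increasing degree forces all $c_{\beta\gamma}=0$ (the lowest-degree $\gamma$ appearing is detected by applying $\omega$ to $x^\gamma$).

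For the existence of $\omega_i$, the key observation is that the Euler-type operator $x_k\partial_k$ acts on $x_1^{a_1}\cdots x_n^{a_n}$ by the scalar $a_k$, hence any polynomial $q(x_1\partial_1,\dots,x_n\partial_n)\in\mathbb W_n$ acts by the scalar $q(a_1,\dots,a_n)$. Thus I would like to take $\omega_i = x_1^{b_{i1}}\cdots\,(\text{something})\cdots\partial_1^{?}\cdots$ times $p_i(x_1\partial_1,\dots,x_n\partial_n)$, but one must be careful: multiplication by $x_k$ or by $\partial_k$ shifts the exponent $a_k$, so the Euler operators must be evaluated \emph{after} the shift, and when $b_{ik}>0$ the operator $\partial_k^{b_{ik}}$ annihilates monomials with $a_k<b_{ik}$ — which is exactly where the $(b_{i1},\dots,b_{in})$-compatibility hypothesis enters. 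Concretely, for a single index $k$: if $b_{ik}\ge 0$, then $\partial_k^{b_{ik}}\cdot x_k^{a_k} = a_k(a_k-1)\cdots(a_k-b_{ik}+1)\,x_k^{a_k-b_{ik}}$ for all $a_k\ge 0$ (including $a_k<b_{ik}$, where both sides vanish); if $b_{ik}<0$, then $x_k^{-b_{ik}}\cdot x_k^{a_k}=x_k^{a_k-b_{ik}}$. So $\omega_i$ should be built by, for each $k$, inserting $\partial_k^{b_{ik}}$ when $b_{ik}\ge0$ and $x_k^{-b_{ik}}$ when $b_{ik}<0$, and then left-multiplying by the divided polynomial $p_i / \prod_{k:\,b_{ik}\ge0} a_k(a_k-1)\cdots(a_k-b_{ik}+1)$ evaluated at the Euler operators — this quotient is a genuine polynomial precisely by Definition~\ref{defBcompatible}.

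The main obstacle, and the point requiring the most care, is bookkeeping the order of operations and the argument shifts: after applying $\partial_k^{b_{ik}}$ (or $x_k^{-b_{ik}}$) the exponent in slot $k$ has become $a_k-b_{ik}$, so if I want the Euler operator to read off the \emph{original} $a_k$ I must either apply the polynomial-in-Euler-operators factor first (on the left, before the shift) or compose it with the substitution $a_k\mapsto a_k+b_{ik}$. I would adopt the convention of putting the $p_i$-derived factor on the far left so it sees the original exponents, verify on a monomial that the composite indeed yields $p_i(a_1,\dots,a_n)x_1^{a_1-b_{i1}}\cdots x_n^{a_n-b_{in}}$ using the single-variable identities above (and noting that the various $x_k,\partial_k$ for different $k$ commute so the slots decouple), and finally record that $\omega := \sum_i \omega_i$ works by linearity. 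The compatibility hypothesis is used exactly once, to guarantee the relevant division in $\mathbb W_n$'s coefficient ring is exact so that $\omega_i$ is a bona fide element of $\mathbb W_n$; this is presumably the "polynomial division" alluded to in \eqref{eqPolyDivForWeyl}.
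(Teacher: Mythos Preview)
Your approach is essentially the paper's: reduce to a single summand by linearity, realize the scalar $p_i(a_1,\dots,a_n)$ via the Euler operators $x_k\partial_k$, handle the exponent shift with $\partial_k^{b_{ik}}$ (when $b_{ik}>0$) or $x_k^{-b_{ik}}$ (when $b_{ik}\le 0$), and invoke the compatibility hypothesis to make the division by the falling factorials exact; this is precisely the paper's \eqref{eqPolyDivForWeyl} and \eqref{eqCaseBkNegative}. Your uniqueness argument (faithfulness of $\mathbb W_n$ on $\mathbb S_n$ via the normal-ordered form) is a bit more explicit than the paper's one-line remark, but amounts to the same thing.

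One slip to flag: in the convention $AB\cdot v=A(B\cdot v)$, the factor ``on the far left'' is applied \emph{last} and therefore sees the \emph{shifted} exponents $a_k-b_{ik}$, not the original ones. To have the Euler-operator polynomial read off the original $a_k$ you must place it on the far \emph{right} (applied first), with the shift operators $\partial_k^{b_{ik}}$ and $x_k^{-b_{ik}}$ to its left; alternatively, the substitution $a_k\mapsto a_k+b_{ik}$ you mention as the second option repairs the left-placed version. With that ordering corrected your construction goes through and matches the paper's.
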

\begin{proof}
By linearity, it suffices to prove the Lemma in the case where $I$ has only one element, i.e.,  
\[ g(x_1^{a_1}\dots x_n^{a_n})= p(a_1, \dots, a_n)x_{1}^{a_1-b_1}\dots x_{n}^{a_n-b_n}\quad .
\]

Let $\mathbb Q[a_1,\dots, a_n]$ denote the polynomials in the variables $a_i$. 

\noindent Case 1. Suppose first $b_k\leq 0$ for all $k$. Define the $\mathbb Q$-linear map 
\begin{equation}\label{eqCaseBkNonnegative}
\begin{array}{rcl}
\omega_{b_1,\dots, b_n}:\mathbb Q[a_1,\dots, a_n]&\to& \mathbb W_n \\
a_1^{m_1}\dots a_n^{m_n}&\stackrel{\omega_{b_1,\dots, b_n}}{\mapsto}&\displaystyle
\prod_{i=1}^n\left( x_i^{-b_i} (x_i\partial_i)^{m_i}\right) \quad.
\end{array}
\end{equation}
Direct check shows that $\omega_{b_1,\dots, b_n} (p(a_1,\dots, a_n))$ has the same action $\mathbb S_n$ as $g$.

\noindent Case 2. Suppose $b_k>0$ for some $k$. By the requirements of the lemma the expression
\begin{equation}\label{eqPolyDivForWeyl}
p'(a_1,\dots, a_n):= \frac{p(a_1,\dots, a_n)}{\displaystyle\prod_{b_k>0} a_k(a_k-1)\dots (a_k-b_k+1) }
\end{equation}
is a polynomial. Define 
\[
\bar b_k=\left\{\begin{array}{ll}b_k& \mathrm {if~}b_k\leq 0 \\
0 &\mathrm{otherwise} \end{array}\right.
\quad .
\]
Direct check shows that the operator 
\begin{equation}\label{eqCaseBkNegative}
\omega:=\omega_{\bar b_1, \dots, \bar b_k}(p'(a_1,\dots, a_n)) \left(\prod_{b_k>0}\partial_k^{b_k}\right)
\end{equation}
has the same action on $\mathbb S_n$ as $g$. The uniqueness of $\omega$ follows from the fact that $\omega$ is uniquely determined by its action on monomials in the  $x_i$'s.
\end{proof}

Given  data  $(\{(p_i, b_{11}, \dots, b_{1n} )\}_{i\in I} )$ as in Lemma \ref{leWriteAsDiffOperator}, let 
\begin{equation}\label{eqNotationForDiffOperator}
\omega(\{(p_i, b_{11}, \dots, b_{1n} )\}_{i\in I} )
\end{equation}
denote the operator constructed in Lemma  \ref{leWriteAsDiffOperator}.

We will need the following observation.
\begin{lemma}\label{leCompatibilityTranslation}
If $p(a_1,\dots, a_n)$ is $(b_1,\dots, b_n)$-compatible, then $\binom{a_j}{c_j} p(a_1,\dots, a_j-c_j,\dots, a_n)$ is $(b_1,\dots,b_j+c_j,\dots, b_n)$-compatible.
\end{lemma}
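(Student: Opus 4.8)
The plan is to reduce the claim to a single-variable statement and then verify that statement by a direct divisibility argument using the definition of compatibility (Definition \ref{defBcompatible}). First I would observe that compatibility is a conjunction of conditions, one for each index $k$ with the relevant bound nonnegative, and that the operation $p(a_1,\dots,a_n)\mapsto \binom{a_j}{c_j}p(a_1,\dots,a_j-c_j,\dots,a_n)$ only alters the variable $a_j$. Hence for every index $k\neq j$ the $k$-th divisibility condition is preserved: if $a_k(a_k-1)\cdots(a_k-b_k+1)$ divides $p$, then since that factor does not involve $a_j$, it still divides the shifted polynomial $p(a_1,\dots,a_j-c_j,\dots,a_n)$ (substituting $a_j\mapsto a_j-c_j$ is an automorphism of $\mathbb{Q}[a_1,\dots,a_n]$ fixing the factor), and multiplying by $\binom{a_j}{c_j}$ changes nothing. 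So it remains to treat the index $j$, where the new bound is $b_j+c_j$.

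For the index $j$ the problem is genuinely one-variable: fixing all other $a_k$ as parameters, I must show that whenever $b_j\geq 0$ (so $a_j(a_j-1)\cdots(a_j-b_j+1)$ divides $p$ in the variable $a_j$), the polynomial $\binom{a_j}{c_j}\,p(\dots,a_j-c_j,\dots)$ is divisible by $a_j(a_j-1)\cdots(a_j-b_j-c_j+1)$. Write $a$ for $a_j$, $b$ for $b_j$, $c$ for $c_j$. Since $p$ is divisible by $a(a-1)\cdots(a-b+1)$, the shift $p(\dots,a-c,\dots)$ is divisible by $(a-c)(a-c-1)\cdots(a-c-b+1)$, i.e.\ by the product $\prod_{r=0}^{b-1}(a-c-r)$, which covers the integer points $a=c,c-1,\dots,c-b+1$. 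On the other hand $\binom{a}{c}=\tfrac{1}{c!}a(a-1)\cdots(a-c+1)=\tfrac{1}{c!}\prod_{r=0}^{c-1}(a-r)$, which covers the integer points $a=0,1,\dots,c-1$. The key point is that these two sets of linear factors — $\{a-c-r: 0\le r\le b-1\}$ from the shifted $p$ and $\{a-r:0\le r\le c-1\}$ from the binomial coefficient — are \emph{disjoint as elements of} $\mathbb{Q}[a]$ (the first has roots $c-b+1,\dots,c$, the second has roots $0,\dots,c-1$, and when $b,c\ge 0$ these ranges do not overlap), so their product is divisible by the product of all of them, which is exactly $a(a-1)\cdots(a-b-c+1)$ up to the harmless constant $1/c!$. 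Reassembling this over all choices of the remaining parameters $a_k$ gives divisibility of the full $n$-variable polynomial by $a_j(a_j-1)\cdots(a_j-b_j-c_j+1)$, which is the $j$-th compatibility condition for the target tuple.

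I expect the only subtle point — the ``hard part,'' though it is not hard — to be the bookkeeping that the two families of linear factors genuinely do not collide, so that one may legitimately take their product rather than merely their least common multiple; this is where the hypotheses $b_j\ge 0$ (for the condition to be active) and $c_j\ge 0$ (so that $\binom{a_j}{c_j}$ is an honest product of $c_j$ consecutive factors) are both used, and it is what makes the counts add as $b_j+c_j$. Everything else is substitution and the elementary fact that in the polynomial ring $\mathbb{Q}[a_1,\dots,a_n]$, divisibility by a product of pairwise non-associate monic linear polynomials is equivalent to divisibility by each of them separately.
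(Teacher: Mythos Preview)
The paper gives no proof of this lemma (it introduces it as an ``observation'' and moves on), so there is no approach to compare against. Your strategy is the natural one and is essentially right, but two points need fixing.

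First, you miscompute the roots of the shifted factor: $\prod_{r=0}^{b-1}(a-c-r)$ vanishes at $a=c,c+1,\dots,c+b-1$, not at $c-b+1,\dots,c$ as you write. With your stated root sets $\{c-b+1,\dots,c\}$ and $\{0,\dots,c-1\}$ the two families would in fact overlap whenever $b\ge 2$, so your disjointness claim contradicts your own arithmetic. With the correct roots the blocks $\{0,\dots,c-1\}$ and $\{c,\dots,c+b-1\}$ are disjoint and their union is $\{0,\dots,b+c-1\}$, which is exactly what is needed. Second, you verify the $j$-th condition only under the assumption $b_j\ge 0$, but the condition you must check is active whenever $b_j+c_j\ge 0$. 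The case $b_j<0$, $b_j+c_j\ge 0$ is not covered by your argument: there the hypothesis on $p$ says nothing at index $j$, yet the conclusion is still nontrivial. This case is easy (since then $b_j+c_j\le c_j$, the required falling factorial $a_j(a_j-1)\cdots(a_j-b_j-c_j+1)$ already divides $\binom{a_j}{c_j}$), but it should be said.
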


Let $u_1,\dots, u_n $ be a basis of ${\mathfrak n}_-$ consisting of Chevalley-Weyl generators. We assume there is an order $<$ on these generators compatible with their labeling, that is,
\begin{equation}\label{eqPartialOrderChevGens}
u_1<u_2<\dots <u_n\quad .
\end{equation}
We pose no restrictions on $<$; in particular the order need not be compatible with any particular order of the weights of the generators. Changing the order causes the algorithm of the present paper to produce different differential operator realizations of the simple Lie algebras. The explicit form of the differential operators given in Table \ref{tableTheTable} depends on $<$.

For each generator $u_i$, let $\beta_i$ be the root corresponding to $u_i$. 

\begin{lemma}\label{leAdjointActionIsCompatible}
Let $g\in {\mathfrak g}$ and $j\in \{0,\dots, n\}$. There exists a finite indexing set $I(g)$, such that for each $i\in I(g)$ there exist integers $(b_{i1},\dots, b_{in})$, a $(b_{i1},\dots, b_{in})$-compatible polynomial $p_i(a_1, \dots, a_n)$ (Definition \ref{defBcompatible}) and an element $f_i\in U({\mathfrak g})$, such that $f_i$ is a Chevalley-Weyl generator lying outside of ${\mathfrak n}_-$ or equal to 1 and such that 
\begin{equation}\label{eqCommutingByInduction}
u_1^{a_1}\dots u_{j}^{a_j} g u_{j+1}^{a_{j+1}} u_n^{a_n}= \sum_{i\in I(g)} p_i (a_1,\dots, a_n)u_1^{a_1-b_{i1}}\dots u_n^{a_n-b_{in}}f_i\quad .
\end{equation}
\end{lemma}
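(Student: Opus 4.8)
\noindent\emph{Proof proposal.}
The plan is to prove \eqref{eqCommutingByInduction} by pushing $g$ to the right, across $u_{j+1}^{a_{j+1}},\dots,u_n^{a_n}$ one power at a time. The engine is Lemma \ref{leCommuteBmaroundA}: commuting $g$ past $u_k^{a_k}$ trades $g$ for the brackets $(\mathrm{ad}\,u_k)^m(g)\in{\mathfrak g}$ and attaches the coefficients $\binom{a_k}{m}=a_k(a_k-1)\cdots(a_k-m+1)/m!$, which are precisely the polynomials to which Lemma \ref{leCompatibilityTranslation} is tailored. First I would reduce to the case that $g$ is a single Chevalley-Weyl basis vector: the left-hand side of \eqref{eqCommutingByInduction} is linear in $g$, and two instances of the right-hand side combine by taking the disjoint union of the indexing sets and scaling the polynomials (scaling does not affect compatibility). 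I would also fix the $\mathbb Z$-grading ${\mathfrak g}=\bigoplus_d{\mathfrak g}_d$ attached to ${\mathfrak p}$, for which ${\mathfrak g}_0={\mathfrak l}$, $\bigoplus_{d>0}{\mathfrak g}_d={\mathfrak n}$ and $\bigoplus_{d<0}{\mathfrak g}_d={\mathfrak n}_-$, so that only finitely many $d$ occur; the only facts needed are that for $m\geq 1$ the vector $(\mathrm{ad}\,u_k)^m(g)$ has strictly smaller degree than $g$, lies in ${\mathfrak n}_-$ whenever $g$ does, and vanishes once $m$ is large.

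Next I would isolate two reduction steps. \emph{Move right}: the $j$-version for $g$ follows from the $(j+1)$-version applied to each $(\mathrm{ad}\,u_{j+1})^m(g)$, $m\geq 0$. Indeed, substituting $gu_{j+1}^{a_{j+1}}=\sum_m(-1)^m\binom{a_{j+1}}{m}u_{j+1}^{a_{j+1}-m}(\mathrm{ad}\,u_{j+1})^m(g)$ into the left-hand side of \eqref{eqCommutingByInduction}, applying the $(j+1)$-version to each $u_1^{a_1}\cdots u_{j+1}^{a_{j+1}-m}(\mathrm{ad}\,u_{j+1})^m(g)u_{j+2}^{a_{j+2}}\cdots u_n^{a_n}$ (only $m\leq a_{j+1}$ contribute, so the exponent shift is legitimate), and collecting terms, one obtains an expression of the required shape whose exponent in slot $j+1$ is increased by $m$ and whose coefficient is $(-1)^m\binom{a_{j+1}}{m}$ times the old polynomial with $a_{j+1}$ replaced by $a_{j+1}-m$ — exactly the operation under which Lemma \ref{leCompatibilityTranslation} preserves compatibility. \emph{Move left} is the mirror statement, deriving the $j$-version of $g$ from the $(j-1)$-version of the $(\mathrm{ad}\,u_j)^m(g)$ via the second identity of Lemma \ref{leCommuteBmaroundA}. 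In either step $m=0$ reproduces $g$ at a neighbouring level, $m\geq 1$ involves only vectors of strictly smaller degree, and all sums are finite.

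Finally I would assemble these by a double induction: outer on the degree of $g$, run upward through the (finitely many) degrees occurring in ${\mathfrak g}$; inner on $j$. The base of the outer induction is the case where $\deg g$ is minimal, so $g=u_\ell\in{\mathfrak n}_-$ and $[{\mathfrak n}_-,u_\ell]=0$; then $u_\ell$ commutes with every $u_k$, and \eqref{eqCommutingByInduction} holds at every $j$ with a single summand ($p_i=1$, $b_{i\ell}=-1$, the other $b_{i\cdot}=0$, $f_i=1$). For the outer inductive step, if $g\notin{\mathfrak n}_-$ then $j=n$ is immediate (one summand, $p_i=1$, all $b_{i\cdot}=0$, $f_i=g$) and \emph{move right} lowers $j$ from $n$ down to $0$; if $g=u_\ell\in{\mathfrak n}_-$ with non-minimal degree then $j=\ell$ is immediate ($u_\ell$ is absorbed into $u_\ell^{a_\ell}$; one summand, $f_i=1$) and from there \emph{move left} reaches $j=\ell+1,\dots,n$ while \emph{move right} reaches $j=\ell-1,\dots,0$. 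In every application of \emph{move left}/\emph{move right} the auxiliary vectors $(\mathrm{ad}\,u_\bullet)^m(g)$ with $m\geq 1$ are of strictly smaller degree (and remain in ${\mathfrak n}_-$ if $g$ is), hence are supplied by the outer hypothesis, while the $m=0$ term is $g$ at an adjacent level already treated by the inner induction.

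I expect the only real difficulty to be organizational: arranging the two inductions so that the case $j=n$, $g\in{\mathfrak n}_-$ — which is just the PBW straightening of $u_1^{a_1}\cdots u_n^{a_n}u_\ell$ inside $U({\mathfrak n}_-)$ — falls out of the degree induction rather than needing separate handling, and keeping the exponent shifts and the substitutions $a_k\mapsto a_k-m$ aligned with the exact hypothesis of Lemma \ref{leCompatibilityTranslation} at each use. Once the elementary grading facts are granted, what remains is bookkeeping with binomial coefficients, with no analytic subtlety: every sum over $m$ terminates ($\binom{a_k}{m}=0$ for $m>a_k$, or $(\mathrm{ad}\,u_k)^m(g)=0$), and every identity is an identity of polynomials in $a_1,\dots,a_n$ valid for all nonnegative integer values of the $a_k$.
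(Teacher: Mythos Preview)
Your proposal is correct and follows essentially the same approach as the paper: both reduce to a single Chevalley--Weyl generator and argue by a double induction on the weight/degree of $g$ and on the position $j$, using Lemma~\ref{leCommuteBmaroundA} to commute $g$ past each $u_k^{a_k}$ and Lemma~\ref{leCompatibilityTranslation} to track compatibility of the resulting coefficients. The only differences are organizational --- the paper inducts on the root-system weight rather than on the parabolic $\mathbb Z$-grading, and it handles the case $g=u_l\in{\mathfrak n}_-$ with $l<j$ by an explicit left-moving iteration rather than by your cleaner two-sided inner induction anchored at $j=\ell$ --- but the content is the same.
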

\begin{proof}
It suffices to prove our claim in the case when $g$ is a Chevalley-Weyl generator. We proceed by double induction on the weight of $g$ and on the index $j$. We suppose we have proved by induction hypothesis our statement for all weights smaller than the weight of $g$. We suppose by second induction hypothesis that for all indices larger than $j$ we have proved our statement for weights smaller than or equal to the weight of $g$. We have three cases for the induction step. 

The first case is that $g=u_{j+1}$ or $g=u_j$; this case is trivial and there is nothing to prove. 

The second case is that $g=u_l$ with $l<j$. We apply Lemma \ref{leCommuteBmaroundA} with respect to $u_{j}^{a_j} g$ to obtain that our starting expression is a linear combination of $u_1^{a_1}\dots u_{j-1}^{a_{j-1}} u_l u_{j}^{a_{j}} \dots u_n^{a_n}$ and monomials of the form 
\[
\binom{a_j}{ c_j} u_1^{a_1}\dots u_{j-1}^{a_{j-1}} f u_{j}^{a_{j}-c_j}\dots  u_n^{a_n}
\]
with the weight of $f$ strictly smaller than the weight of $u_l$. By induction hypothesis these monomials can be written in the form \eqref{eqCommutingByInduction}. Here, the compatibility of the coefficients follows from Lemma \ref{leCompatibilityTranslation}. Continuing in this fashion, we may commute $u_l$ past $u_{j-1}^{a_{j-1}}$, and so on, until $u_l$ is positioned next to $u_{l}^{a_l}$ and so our induction step holds in the second case as well.

The final case is that $g$ is a Chevalley-Weyl generator that is not among the generators $u_{1},\dots,u_{j+1}$. We apply Lemma \ref{leCommuteBmaroundA} with respect to $g u_{j+1}^{a_{j+1}} $ to get that our starting expression is a linear combination of monomials of the form 
\[
\binom{a_{j+1}}{ c_{j+1}} u_1^{a_1}\dots u_{j+1}^{a_{j+1}-c_{j+1}} f u_{j+2}^{a_{j+2}} \dots u_n^{a_n}\quad ,
\]
where $f$ is a Chevalley-Weyl generator that is either equal to $g$ or is of strictly smaller weight. In both cases the induction step follows from the induction hypothesis and Lemma \ref{leCompatibilityTranslation}.
\end{proof}

Let $m_1,\dots, m_{\dim V_{\lambda}({\mathfrak l})}$ be a basis of $V_{\lambda}({\mathfrak l})$. By (\ref{eqGenVermaIsUnTimesV}) and the Poincare-Birkhoff-Witt theorem we can write every element in $ M_{\lambda}({\mathfrak g}, {\mathfrak p})$ uniquely as a linear combination of monomials the form 
\[
u_1^{a_1} \dots u_n^{a_n}\otimes_{U({\mathfrak p})} m_i\quad. 
\]

Define the linear map
\begin{equation}\label{eqUgonIsoToSn}
\begin{array}{rcl}
\varphi:U({\mathfrak n}_-) &\to&  \mathbb S_n\\
u_1^{a_1} \dots u_n^{a_n}&\stackrel{\varphi}{\mapsto}& x_1^{a_1}\dots x_n^{a_n}\quad .
\end{array}
\end{equation}

Define the linear map 
\[\begin{array}{rcl}
\varphi_\lambda: M_{\lambda}({\mathfrak g},{\mathfrak p}) &\to&   \mathbb S_n\otimes V_\lambda({\mathfrak l})\\
u_1^{a_1} \dots u_n^{a_n}\otimes_{U({\mathfrak p})} m_i&\stackrel{\varphi_\lambda}{\mapsto}&\varphi (u_1^{a_1} \dots u_n^{a_n})\otimes m_i =x_1^{a_1}\dots x_n^{a_n}\otimes m_i\quad .
\end{array}
\]
By (\ref{eqGenVermaIsUnTimesV}) the map $\varphi_\lambda$ is a vector space isomorphism between  $M_{\lambda}({\mathfrak g},{\mathfrak p})$ and $\mathbb S_n \otimes V_\lambda({\mathfrak l})$. We can therefore identify $\End \left(M_\lambda({\mathfrak g},{\mathfrak p})\right)$ with $\End (\mathbb S_n \otimes V_\lambda({\mathfrak l}))$ via the map $\varphi_\lambda'$ given by
\begin{equation}\label{eqVarphiPrime}
\begin{array}{rcl}
\varphi_\lambda':\End \left(M_\lambda({\mathfrak g},{\mathfrak p})\right)&\to& \End (\mathbb S_n \otimes V_\lambda({\mathfrak l}))\\
a &\stackrel{\varphi_\lambda'}{\mapsto}& \varphi_\lambda \circ a\circ \varphi_\lambda^{-1}\quad .
\end{array}
\end{equation}
We are now in a position to prove the following.
\begin{proposition}\label{propEmbedInWeylNoTensor}
Let ${\mathfrak g}$ be a semisimple Lie algebra with parabolic subalgebra ${\mathfrak p}$.
\begin{itemize}
\item[(a)] There exists a linear map  $\Phi_0:U({\mathfrak g})\to \mathbb W_n \otimes \id $ which makes the following diagram commutative.
\[
\xymatrix{
\End M_0({\mathfrak g}, {\mathfrak p}) \ar[r]^{\varphi'_0} &  \End \mathbb S_n \otimes \id&\simeq &\mathbb S_n\\
U({\mathfrak g}) \ar[u]^{\cdot} \ar[r]^{\Phi_0} & \mathbb W_n\otimes \id\ar[u]^{\cdot}&{\simeq} &\mathbb W_n
}
\quad .
\]
\item[(b)] $\Phi_0$ is a Lie algebra homomorphism.
\end{itemize}
\end{proposition}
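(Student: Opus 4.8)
The plan is to define $\Phi_0$ by transporting the left regular action of $U({\mathfrak g})$ on $M_0({\mathfrak g},{\mathfrak p})$ through the isomorphism $\varphi_0'$ of \eqref{eqVarphiPrime}, and then checking that each resulting endomorphism of $\mathbb S_n$ coincides with the $\cdot$-action of a (necessarily unique) element of $\mathbb W_n\otimes\id$. Concretely, for $u\in U({\mathfrak g})$ write $L_u\in\End M_0({\mathfrak g},{\mathfrak p})$ for left multiplication by $u$; then $u\mapsto L_u$ is an algebra homomorphism, and, $\varphi_0'$ being an algebra isomorphism, so is $u\mapsto\varphi_0'(L_u)$. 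Since $\mathbb S_n$ is a faithful $\mathbb W_n$-module (this is the uniqueness statement already invoked in Lemma \ref{leWriteAsDiffOperator}), the $\cdot$-action identifies $\mathbb W_n\otimes\id$ with a subalgebra of $\End\mathbb S_n$; it therefore suffices to prove that $\varphi_0'(L_u)$ lies in this subalgebra for every $u$, and then to set $\Phi_0(u)$ equal to the unique preimage. Linearity of $\Phi_0$ and commutativity of the diagram in (a) are then immediate from the construction, and (b) will follow because $\Phi_0$ is the restriction to ${\mathfrak g}$ of an algebra homomorphism.

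The main step is the case $u=g\in{\mathfrak g}$. Applying Lemma \ref{leAdjointActionIsCompatible} with $j=0$ gives, in $U({\mathfrak g})$,
\[
g\,u_1^{a_1}\cdots u_n^{a_n}=\sum_{i\in I(g)}p_i(a_1,\dots,a_n)\,u_1^{a_1-b_{i1}}\cdots u_n^{a_n-b_{in}}f_i .
\]
Here one exploits the special feature of the case $\lambda=0$: the module $V_0({\mathfrak l})$ is one-dimensional, the whole Levi ${\mathfrak l}$ acting trivially on it and ${\mathfrak n}$ acting trivially by construction, so every Chevalley-Weyl generator lying outside ${\mathfrak n}_-$ --- being an element of ${\mathfrak p}={\mathfrak l}\oplus{\mathfrak n}$ --- annihilates $V_0({\mathfrak l})$. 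Consequently, in $M_0({\mathfrak g},{\mathfrak p})=U({\mathfrak n}_-)\otimes_{U({\mathfrak p})}V_0({\mathfrak l})$ every summand above with $f_i\neq 1$ vanishes, and, applying $\varphi_0$ (together with \eqref{eqGenVermaIsUnTimesV}, \eqref{eqUgonIsoToSn} and the identification $\mathbb S_n\otimes V_0({\mathfrak l})\simeq\mathbb S_n$), the operator $\varphi_0'(L_g)$ sends $x_1^{a_1}\cdots x_n^{a_n}$ to $\sum_{i\in I(g),\,f_i=1}p_i(a_1,\dots,a_n)\,x_1^{a_1-b_{i1}}\cdots x_n^{a_n-b_{in}}$. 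This is precisely the shape of operator handled by Lemma \ref{leWriteAsDiffOperator}, whose hypotheses hold since the relevant $p_i$ are $(b_{i1},\dots,b_{in})$-compatible; that lemma then produces $\Phi_0(g):=\omega\bigl(\{(p_i,b_{i1},\dots,b_{in})\}_{i\in I(g),\,f_i=1}\bigr)\in\mathbb W_n$ realizing $\varphi_0'(L_g)$.

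For general $u\in U({\mathfrak g})$, note that $U({\mathfrak g})$ is spanned by $1$ and products $g_1\cdots g_k$ with the $g_s\in{\mathfrak g}$; since $u\mapsto\varphi_0'(L_u)$ is an algebra homomorphism, $\varphi_0'(L_{g_1\cdots g_k})=\varphi_0'(L_{g_1})\circ\cdots\circ\varphi_0'(L_{g_k})$ is a composition of $\cdot$-actions of elements of $\mathbb W_n$, hence is itself one, and $\varphi_0'(L_1)=\id$; this proves (a). For (b), $L_{[g_1,g_2]}=[L_{g_1},L_{g_2}]$ in $\End M_0({\mathfrak g},{\mathfrak p})$, and applying the algebra isomorphism $\varphi_0'$ followed by the injective algebra homomorphism $\mathbb W_n\to\End\mathbb S_n$ gives $\Phi_0([g_1,g_2])=[\Phi_0(g_1),\Phi_0(g_2)]$. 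The genuine difficulty of the proposition is contained upstream, in the double-induction normal form of Lemma \ref{leAdjointActionIsCompatible} and its compatibility bookkeeping; granting that, the only new ingredient here is the observation that $\lambda=0$ forces the tail generators $f_i\neq 1$ to act as zero, which is exactly what lets the answer live in $\mathbb W_n$ rather than in $\mathbb W_n\otimes\End V_\lambda({\mathfrak l})$.
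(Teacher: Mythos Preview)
Your proof is correct and follows essentially the same approach as the paper: apply Lemma \ref{leAdjointActionIsCompatible} with $j=0$, observe that for $\lambda=0$ the tail generators $f_i\neq 1$ (which lie in ${\mathfrak p}$) kill the one-dimensional module $V_0({\mathfrak l})$, and then invoke Lemma \ref{leWriteAsDiffOperator} to produce the Weyl algebra element. Your write-up is in fact slightly more careful than the paper's in two respects: you make explicit the passage from ${\mathfrak g}$ to all of $U({\mathfrak g})$ via multiplicativity of $u\mapsto\varphi_0'(L_u)$, and you phrase (b) cleanly as the restriction of an associative-algebra homomorphism, using faithfulness of $\mathbb S_n$ over $\mathbb W_n$ to transfer the bracket relation back.
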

\begin{proof}
(a) Let $g\in {\mathfrak g} $. By Lemma \ref{leAdjointActionIsCompatible}, there exists data as described in the Lemma such that \eqref{eqCommutingByInduction} holds. For each $g\in {\mathfrak g}$, let $I(g)$ be the set given by Lemma \ref{leAdjointActionIsCompatible}, and let 
\[
J(g):=\{i\in I| f_i=1\}
\] 
i.e., the subset corresponding to the summands in \eqref{eqCommutingByInduction} that end on 1. Using Lemma \ref{leAdjointActionIsCompatible} we can define the element $\omega(\{(p_i, b_{i1},\dots, b_{in})\}_{i\in J(g)})$ via \eqref{eqNotationForDiffOperator}, and therefore we can define the linear map $\Phi_0:{\mathfrak g} \to \mathbb W_n\otimes \id$  by 
\[
\Phi_0(g)  =\omega(\{(p_i, b_{i1},\dots, b_{in})\}_{i\in J(g)})\otimes \id\quad .
\]
Let $m_1$ be a non-zero vector of the one-dimensional vector space $V_0({\mathfrak l})$. The action of $\varphi_0'(g)$ on $\mathbb S_n\otimes m_1$ is computed on each monomial $x_1^{a_1}\dots x_n^{a_n}\otimes m_1$ by applying the map $\varphi_0$ to $gu_1^{a_1}\dots u_n^{a_j} \cdot m_1$. We have that the Chevalley-Weyl generators lying outside of ${\mathfrak n}$ have zero action on $m_1$. Together with Lemma \ref{leAdjointActionIsCompatible}, this implies that
\begin{equation*}\label{eqComputationMainProp}
\begin{array}{rcl}
\varphi_0\left(gu_1^{a_1}\dots u_n^{a_j} \cdot m_1 \right)&=&\varphi_0\left(\sum_{i\in I(g)} p_i (a_1,\dots, a_n)u_1^{a_1-b_{i1}}\dots u_n^{a_n-b_{in}}f_i\cdot m_1 \right)\\
&=&\varphi_0\left(\sum_{i\in J(g)} p_i (a_1,\dots, a_n)u_1^{a_1-b_{i1}}\dots u_n^{a_n-b_{in}}\cdot m_1\right)\\
&=&\sum_{i\in J(g)} p_i (a_1,\dots, a_n)x_1^{a_1-b_{i1}}\dots x_n^{a_n-b_{in}}\otimes m_1\\
&=&\Phi_0(g)\cdot (x_1^{a_1}\dots x_n^{a_n}\otimes m_1)\\
 &=&\Phi_0(g)\cdot \varphi_0(u_1^{a_1}\dots u_n^{a_n}\otimes_{U({\mathfrak p})} m_1) \quad .
\end{array}
\end{equation*}
The above proves (a).

\noindent(b) As $M_0({\mathfrak g},{\mathfrak p})$ is a (Lie algebra) representation of ${\mathfrak g}$, the image of ${\mathfrak g}$ in $\End M_0({\mathfrak g},{\mathfrak p})$ is a Lie algebra, and by (a) so is the image of $ {\mathfrak g}$ in $\End \mathbb S_n \otimes \id$.	
\end{proof}	

We finish this section with an interpretation of the action of ${\mathfrak g}$ as a Lie subalgebra of $\mathbb W_n\otimes \End V_\lambda({\mathfrak l})$. We recall that, given associative (and therefore Lie) algebras $A_1, A_2$, we have that $A_1\otimes A_2$ is an associative (and therefore Lie) algebra with $(a_1\otimes a_2)(b_1\otimes b_2)=a_1b_1\otimes a_2b_2$. Further, given two $A_i$-modules $M_i$, $i=1,2$, then $M_1\otimes M_2$ can be equipped with a $A_1\otimes A_2$-module structure via
\begin{equation}\label{eqDirectSumAction}
(a_1\otimes a_2) \cdot (m_1\otimes m_2):= (a_1\cdot m_1)\otimes( a_2\cdot m_2)\quad .
\end{equation} 
In particular, $\mathbb S_n\otimes V_\lambda({\mathfrak l})$ is a $\mathbb W_n\otimes\End V_\lambda({\mathfrak l})$-module; the associative algebra $\mathbb W_n\otimes\End V_\lambda({\mathfrak l})$ is sometimes called a ``Weyl algebra with matrix coefficients''.
\begin{proposition}\label{propGenVermaIsInWeylPlusEnd}
There exists a Lie algebra homomorphism  $\Phi_\lambda:U({\mathfrak g})\to \mathbb W_n\otimes \End V_\lambda({\mathfrak l})  $ which makes the following diagram commutative.
\[
\xymatrix{
\End M_\lambda({\mathfrak g}, {\mathfrak p}) \ar[r]^{\varphi'_\lambda} & \End\left(\mathbb S_n\otimes V_\lambda({\mathfrak l})\right) \\
U({\mathfrak g}) \ar[u]^{\cdot} \ar[r]^{\Phi_\lambda} & \mathbb W_n\otimes\End V_\lambda({\mathfrak l}) \ar[u]^{\eqref{eqDirectSumAction}}
}
\quad ,
\]
where we are using \eqref{eqDirectSumAction} with respect to the $\cdot$ action of $\mathbb W_n$ on $\mathbb S_n$.
\end{proposition}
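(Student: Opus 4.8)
The plan is to follow the proof of Proposition \ref{propEmbedInWeylNoTensor} almost verbatim, the only change being that we keep the ``leftover'' Chevalley--Weyl generators $f_i$ furnished by Lemma \ref{leAdjointActionIsCompatible} rather than discarding the summands with $f_i\neq 1$. What makes this work is the observation that every $f_i$ produced by Lemma \ref{leAdjointActionIsCompatible} with $j=0$ is either $1$ or a single Chevalley--Weyl generator lying outside ${\mathfrak n}_-$, and hence lies in ${\mathfrak p}={\mathfrak l}\oplus{\mathfrak n}$: the root vectors outside ${\mathfrak n}_-$ are exactly those with roots in $\Delta({\mathfrak l})\cup\Delta({\mathfrak n})$, and the Cartan sits inside ${\mathfrak l}$. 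Hence, writing $\rho_\lambda\colon U({\mathfrak p})\to\End V_\lambda({\mathfrak l})$ for the associative-algebra representation afforded by $V_\lambda({\mathfrak l})$, on which ${\mathfrak n}$ acts by zero, each $\rho_\lambda(f_i)$ is a well-defined endomorphism of $V_\lambda({\mathfrak l})$.

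Given $g\in{\mathfrak g}$, I would apply Lemma \ref{leAdjointActionIsCompatible} with $j=0$ to write
\[
g\,u_1^{a_1}\cdots u_n^{a_n}=\sum_{i\in I(g)}p_i(a_1,\dots,a_n)\,u_1^{a_1-b_{i1}}\cdots u_n^{a_n-b_{in}}f_i
\]
with each $p_i$ being $(b_{i1},\dots,b_{in})$-compatible, apply this identity to $u_1^{a_1}\cdots u_n^{a_n}\otimes_{U({\mathfrak p})}m_k$, and move each $f_i\in U({\mathfrak p})$ across the tensor product, obtaining $\sum_i p_i(a_1,\dots,a_n)\,u_1^{a_1-b_{i1}}\cdots u_n^{a_n-b_{in}}\otimes_{U({\mathfrak p})}\rho_\lambda(f_i)m_k$. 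Applying $\varphi_\lambda$ and recognising the scalar part through Lemma \ref{leWriteAsDiffOperator} --- whose compatibility hypothesis is exactly what Lemma \ref{leAdjointActionIsCompatible} delivered --- this becomes $\sum_i\bigl(\omega(\{(p_i,b_{i1},\dots,b_{in})\})\cdot x_1^{a_1}\cdots x_n^{a_n}\bigr)\otimes\rho_\lambda(f_i)m_k$. I would therefore set
\[
\Phi_\lambda(g):=\sum_{i\in I(g)}\omega\bigl(\{(p_i,b_{i1},\dots,b_{in})\}\bigr)\otimes\rho_\lambda(f_i)\in\mathbb W_n\otimes\End V_\lambda({\mathfrak l}),
\]
so that, reading the action of $\mathbb W_n\otimes\End V_\lambda({\mathfrak l})$ through \eqref{eqDirectSumAction}, the square commutes on every basis monomial $x_1^{a_1}\cdots x_n^{a_n}\otimes m_k$, hence on all of $\mathbb S_n\otimes V_\lambda({\mathfrak l})$.

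To promote $\Phi_\lambda$ to a Lie algebra homomorphism I would first note that the action map $\mathbb W_n\otimes\End V_\lambda({\mathfrak l})\to\End(\mathbb S_n\otimes V_\lambda({\mathfrak l}))$ of \eqref{eqDirectSumAction} is injective: it factors through $\End\mathbb S_n\otimes\End V_\lambda({\mathfrak l})\hookrightarrow\End(\mathbb S_n\otimes V_\lambda({\mathfrak l}))$, the first map being injective because $\mathbb W_n$ acts faithfully on $\mathbb S_n$ and the last inclusion being injective because $V_\lambda({\mathfrak l})$ is finite-dimensional. Commutativity of the square then reads $\Phi_\lambda(g)\cdot=\varphi_\lambda'(g\,\cdot\,)$ in $\End(\mathbb S_n\otimes V_\lambda({\mathfrak l}))$ for every $g\in{\mathfrak g}$; since the composite of the module action $U({\mathfrak g})\to\End M_\lambda({\mathfrak g},{\mathfrak p})$ with $\varphi_\lambda'$ is an associative-algebra homomorphism and $\cdot$ is an associative-algebra action, this yields $\bigl([\Phi_\lambda(g),\Phi_\lambda(h)]-\Phi_\lambda([g,h])\bigr)\cdot=0$ for $g,h\in{\mathfrak g}$, whence $[\Phi_\lambda(g),\Phi_\lambda(h)]=\Phi_\lambda([g,h])$ by the injectivity just noted. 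Thus $\Phi_\lambda|_{{\mathfrak g}}$ is a Lie algebra homomorphism into $\mathbb W_n\otimes\End V_\lambda({\mathfrak l})$ and extends uniquely to an algebra homomorphism $\Phi_\lambda\colon U({\mathfrak g})\to\mathbb W_n\otimes\End V_\lambda({\mathfrak l})$ by the universal property of $U({\mathfrak g})$; as both paths around the square are then algebra homomorphisms agreeing on the generating set ${\mathfrak g}$, the square commutes on all of $U({\mathfrak g})$. I expect the only step requiring genuine care to be the first one: checking that the generators $f_i$ can be placed inside $U({\mathfrak p})$ so that $\rho_\lambda(f_i)$ is meaningful, and that pushing $f_i$ across $\otimes_{U({\mathfrak p})}$ loses nothing --- which it does not, precisely because $f_i\in U({\mathfrak p})$. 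Everything else is a mechanical transcription of the proof of Proposition \ref{propEmbedInWeylNoTensor}, with injectivity of \eqref{eqDirectSumAction} taking over the role played there by faithfulness of $\mathbb W_n$ on $\mathbb S_n$.
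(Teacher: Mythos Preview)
Your proposal is correct and follows essentially the same route as the paper: define $\Phi_\lambda(g)$ by keeping all summands from Lemma~\ref{leAdjointActionIsCompatible} and tensoring each Weyl-algebra piece with the action of $f_i\in{\mathfrak p}$ on $V_\lambda({\mathfrak l})$, then verify commutativity on monomials and deduce the Lie-homomorphism property from the module structure. In fact you spell out two points the paper leaves implicit---that each $f_i$ lies in ${\mathfrak p}$ so $\rho_\lambda(f_i)$ is defined, and that injectivity of the action \eqref{eqDirectSumAction} is what lets you pull the bracket relation back from $\End(\mathbb S_n\otimes V_\lambda({\mathfrak l}))$ to $\mathbb W_n\otimes\End V_\lambda({\mathfrak l})$.
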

\textbf{Remark.} The proposition remains valid if we replace $V_\lambda({\mathfrak l})$ with an arbitrary ${\mathfrak p}$-module $V$ and $\varphi'_\lambda$ by an operator defined similarly to \eqref{eqVarphiPrime}.

\begin{proof}
Let $g\in {\mathfrak g}$. Let $\{(f_i,p_i,b_{i1},\dots, b_{in})\}_{i\in I}$ be the data whose existence is given by Lemma \ref{leCompatibilityTranslation}. We can therefore define $\Phi_\lambda: U({\mathfrak g})\to \mathbb W_n\otimes\End V_\lambda({\mathfrak l})$ using Lemma \ref{leCompatibilityTranslation} so that the last equality in the following computation holds for an arbitrary vector $v\in V_\lambda({\mathfrak l})$.
\[
{\renewcommand{\arraystretch}{1.5}
\begin{array}{rcl}
\varphi_\lambda\left(gu_1^{a_1}\dots u_n^{a_j} \cdot v \right)&=&\displaystyle \varphi_\lambda\left(\sum_{i\in I(g)} p_i (a_1,\dots, a_n)u_1^{a_1-b_{i1}}\dots u_n^{a_n-b_{in}}\otimes_{U({\mathfrak p})} (f_i \cdot v) \right)\\
&=&\displaystyle \sum_{i\in I(g)} p_i (a_1,\dots, a_n)x_1^{a_1-b_{i1}}\dots x_n^{a_n-b_{in}}\otimes (f_i \cdot v) \\
&=:&\displaystyle \Phi_\varphi(g) (x_1^{a_1}\dots x_n^{a_n}\otimes v)\quad .
\end{array}
}
\]
The remainder of the proof is analogous to that of Proposition \ref{propEmbedInWeylNoTensor} and we omit it.
\end{proof}
\section{An informal algorithm description}\label{secComputerWork}

In the present section we describe an algorithm that explicitly computes the homomorphism $\Phi_\lambda$ from Proposition \ref{propGenVermaIsInWeylPlusEnd}. The homomorphism is described by giving the image of each of the simple Chevalley-Weyl generators of ${\mathfrak g}$.

For a simple complex Lie algebra ${\mathfrak g}$, the  only input data needed to compute an embedding ${\mathfrak g} \hookrightarrow \mathbb W_n\otimes \End V_\lambda({\mathfrak l})$ is the data ${\mathfrak g},\lambda, {\mathfrak p}$. More precisely, ${\mathfrak g}$ can be described to a computer program by the type of the Lie algebra, $\lambda$ by a vector given as the sequence of its fundamental coordinates and ${\mathfrak p}$ by a vector of zeroes and ones indicating which simple roots are crossed out. 

The root systems of each simple Lie algebra can be generated, for example, using \cite[page 162]{DeGraaf:LieAlgebrasAndAlgorithms}. The structure constants of a semisimple Lie algebra relative to a Chevalley-Weyl basis (in particular, their signs), can be computed with a uniform method in both the simply and non-simply laced case according to \cite[pages 47-51]{Samelson} as implemented in our C++ program, \cite{Milev:vpf}.

We suppose $V_\lambda({\mathfrak l})$ is constructed, in particular we have a vector space basis $m_1,\dots, m_{\dim V_{\lambda}({\mathfrak l})}$ of $V_\lambda({\mathfrak l})$ and we know the action of each $g\in {\mathfrak g}$  as an element of $ V_\lambda({\mathfrak l})\otimes V_\lambda({\mathfrak l})^* \simeq  \End V_\lambda({\mathfrak l}) $. An algorithm for  computing the actions of $g$ on $V_\lambda({\mathfrak l})$ from the data ${\mathfrak g},\lambda, {\mathfrak p}$ can be extracted from \cite{Littelmann:ConesCrystalsPatterns} and \cite{Littelmann:PathsAndRootOperators}; a detailed algorithm description is in preparation, \cite{JacksonMilev:ShapovalovForm}. The algorithm is already implemented and tested in \cite{Milev:vpf}.

Let $g_i$ denote the Chevalley-Weyl generators computed using the preceding algorithms. We represent (non-uniquely) a monomial  
\begin{equation*}\label{eqGenericMonomial}
 g_{i_1}^{p_{1}}\dots g_{i_k}^{p_{k}}
\end{equation*}
in $U({\mathfrak g})$ as a list of generator indices and a corresponding list of exponents, i.e., as the \emph{monomial data}
\begin{equation}\label{eqMonData}
((i_1,\dots, i_k),(p_1, \dots, p_k))\quad i_s\in \mathbb Z, p_k\mathrm{~lie~in~the~base~ring}.
\end{equation}
We allow the exponents to belong to any base ring defined in our program that contains\footnote{to achieve this we use C++ templates} $\mathbb Z$. We represent (non-uniquely) an arbitrary element 
\[
\sum a_{(i_1,\dots i_k), (p_1, \dots, p_k)} g_{i_1}^{p_{1}}\dots g_{i_k}^{p_{k}}
\]
in $U({\mathfrak g})$ as a collection of monomial data \eqref{eqMonData} and a list of coefficients lying in the base ring, i.e., as the \emph{element data}
\begin{equation}\label{eqEltData}
((q_1,\dots, q_s), (\textbf{mon}_1, \dots, \textbf{mon}_s))\quad \textbf{mon}_t\mathrm{~of~the~form~}\eqref{eqMonData},~q_t\mathrm{~lie~in~the~base~ring}.
\end{equation}
\onlineVersionOnly{
In the above data, we do not allow a monomial $\textbf{mon}_t$ to appear more than once, and we do not allow $q_t=0$. To the data above, we may add a monomial $\textbf{mon}_{s'}$ together with a coefficient $q_{s'}$ by the following procedure. If the monomial $\textbf{mon}_{s'}$ is not already contained in the list of monomials, we add $\textbf{mon}_{s'}$ and $q_{s'}$ at the end of the corresponding lists in \eqref{eqEltData}. Otherwise, we find the index $t$ for which $\textbf{mon}_t=\textbf{mon}_{s'}$ and add $q_{s'}$ to $q_t$; if the resulting coefficient $q_{s'}+q_t$ is non-zero we are done, else we remove $\textbf{mon}_t$ and the corresponding coefficient from the data \eqref{eqEltData}.
}

\subsection{An iterative procedure implementing Lemma \ref{leAdjointActionIsCompatible}}
Let $<$ be a total order on the Chevalley-Weyl generators $g_i$. For the time being we assume this order to be arbitrary; later on we will fix $<$ to be an extension of the order of the generators of ${\mathfrak n}_-$ given in \eqref{eqPartialOrderChevGens}. 

\begin{Definition}\label{defDataReducedWRTorder}
Suppose that the monomial data \eqref{eqMonData} giving the monomial $g_{i_1}^{p_{1}}$ $\dots$ $g_{i_k}^{p_{k}}$ has the following property: if there is a generator $g_{i_1}^{p_1}$ that appears to the left of a generator $g_{i_2}^{p_2}$ and at the same time $g_{i_1}>g_{i_2}$, then both $p_1$ and $p_2$ are not non-negative integers. Then we say that the monomial data \eqref{eqMonData} is reduced relative to $<$. 

Suppose each monomial data in the element data \eqref{eqEltData} is reduced relative to $<$. Then we say that the element data \eqref{eqEltData} is reduced relative to $<$. 
\end{Definition}

Let  $u\in U({\mathfrak g})$. Suppose $u$ is represented by element data $X$ which is not reduced relative to $<$. We introduce an iterative procedure that, after consecutive application, aims to transform $X$ to element data $X'$ that is reduced relative to $<$ and represents the same element. The procedure is in essence an implementation of the proof of Lemma \ref{leAdjointActionIsCompatible}.

We treat each monomial data $\textbf{mon}$ participating in \eqref{eqEltData} separately.  Suppose $\textbf{mon}$ is not reduced relative to $<$. We remove $\textbf{mon}$ from the data \eqref{eqEltData}. Then, we transform $\textbf{mon}$ so no two consecutive generator indices $i_j, i_{j+1}$ are equal by adding the corresponding exponents and removing one of the indices. If after this transformation $\textbf{mon}$ is reduced relative to $<$ we add it back to the data \eqref{eqEltData}. Otherwise, we find the first generator index $i_j$ for which $g_{i_j}>g_{i_{j+1}} $ and for which either
\begin{itemize}
\item $p_j$ is a positive integer and $g_{i_{j+1}}$ is nilpotent or
\item $p_{j+1}$ is a positive integer and $g_{i_j}$ is nilpotent.
\end{itemize}
We then apply Lemma \ref{leCommuteBmaroundA}, choosing the first identity if  $p_{j}$ is a positive integer and the second identity otherwise. The right hand side of the identities in Lemma \ref{leCommuteBmaroundA} provides us with new set monomials $\textbf{mon}_1,\dots, \textbf{mon}_k$ and a new set of coefficients; we multiply those coefficients by the initial coefficient of $\textbf{mon}$, and add back to the data \eqref{eqEltData}.

\subsection{Algorithm description}\label{secAlgorithmDescription}
We are ready to implement Proposition \ref{propGenVermaIsInWeylPlusEnd}. We recall that we have a fixed semisimple Lie algebra ${\mathfrak g}$, a highest weight $\lambda$, and a nilradical ${\mathfrak n}_-$. We recall from Section \ref{secTheoretical} that $u_1, \dots, u_n$ are the Chevalley-Weyl generators of ${\mathfrak n}_-$. 

From now on fix the order $<$ so that $u_i<g_j$ for any Chevalley-Weyl generator $g_j\notin {\mathfrak n}_-$ and so that the order is an extension of  \eqref{eqPartialOrderChevGens}, that is $u_1<u_{2}<\dots <u_n $. Let $g_{\pm 1}, \dots g_{\pm k}$ be the simple Chevalley-Weyl generators of ${\mathfrak g}$. Consider the action of left multiplication by $g_{\pm i}$ on a generic monomial in $U({\mathfrak n}_-)$, i.e., consider the element
\begin{equation}\label{eqActionOnGenericEltUn-}
g_{\pm i}u_1^{a_1}\dots u_n^{a_n}\quad .
\end{equation}

The proof of Lemma \ref{leAdjointActionIsCompatible} shows that the consecutive application of the iterative procedure described after Definition \ref{defDataReducedWRTorder} transform the element data corresponding to \eqref{eqActionOnGenericEltUn-} to element data that is reduced relative to $<$. The so computed data represents the sum \eqref{eqCommutingByInduction} from  the proof of Lemma \ref{leAdjointActionIsCompatible}. In particular, we can read off the set of integers $(b_{i1},\dots, b_{in})$ and $(b_{i1},\dots, b_{in})$-compatible polynomials $p_i(a_1, \dots, a_n)$ whose existence is asserted by Lemma \ref{leAdjointActionIsCompatible}. 

Fix now the base ring to be $\mathbb Q[a_1,\dots, a_n]$. \onlineVersionOnly{We note that our final implementation allows computations over the ring $\mathbb Q[a_1,\dots, a_n][\lambda_1,\dots, \lambda_s]$, where $\lambda_i$ are indeterminates that correspond to the crossed-out roots of ${\mathfrak p}$ (we recall that $\lambda$ need only be integral with respect to the simple roots of ${\mathfrak l}$, in particular $\lambda$ is allowed to have indeterminate entries in the coordinates corresponding to crossed-out roots).} We recall that the monomials participating in \eqref{eqCommutingByInduction} are of the form 
\[ p (a_1,\dots, a_n)u_1^{a_1-b_{1}}\dots u_n^{a_n-b_{n}}f\quad ,
\]
where $f$ is some Chevalley-Weyl generator of ${\mathfrak g}$ lying outside of ${\mathfrak n}_-$ and the polynomial $p$ is $(b_{1}, \dots, b_n)$-compatible (Definition \ref{defBcompatible}). First, we apply Lemma \ref{leWriteAsDiffOperator} to the first multiplicand $p (a_1,\dots, a_n)u_1^{a_1-b_{1}}\dots u_n^{a_n-b_{n}}$  - for \eqref{eqPolyDivForWeyl} we use polynomial division, and for the remainder of the Lemma we use a standard implementation of $\mathbb W_n$. If $\lambda=0$, we replace $f$ by $\id$; else we replace $f$ by its action on $V_{\lambda}({\mathfrak l})$ and take the tensor product with the differential operator computed in the preceding step. We collect all summands to complete the computation of the homomorphism $\Phi_\lambda$ from Proposition \ref{propGenVermaIsInWeylPlusEnd}. 

\subsection{Proof of main theorem and implementation comments}
To apply the algorithm in Section \ref{secAlgorithmDescription} we fix an order $<$ on the Chevalley generators of ${\mathfrak n}_-$ by declaring that $u_1<u_2$ if the weight $\beta_1$ of $u_1$ is less than the weight $\beta_2$ of $u_2$, where we are comparing $\beta_1$ and $\beta_2$ in the graded lexicographic order in with respect to their simple basis coordinates.

\begin{proof}[~of Theorem \ref{thExceptionalsinWn}] We run the algorithm described in Section \ref{secAlgorithmDescription} as implemented in \cite{Milev:vpf} to generate Table \ref{tableTheTable}. 
\end{proof}

We are including the number of rational number arithmetic operations made by our program during the computation of Table \ref{tableTheTable}. We are also including computation times, however those are not indicative as we have not made any special attempts to optimize our code for speed. All computations were carried out on a low-end 32-bit laptop computer (1.6 Ghz). 

\noindent\begin{tabular}{|cccc|}
\hline
${\mathfrak g}$ & ${\mathfrak p}$& \begin{tabular}{c} Number of arithmetic \\operations need to \\ generate the embeddings \end{tabular} & Running time \\ \hline
$E_8$& last root crossed  & 28,754,380 & 52.10 seconds\\
$E_7$ & last root crossed & 2,431,419  & 4.50 seconds\\
$E_6$ & last root crossed & 487,021  & 0.99 seconds\\
$F_4$& first (long) root  crossed & 374,377& 0.82 seconds\\
$F_4$& last (short) root crossed &469,892 &1.07 seconds\\
$G_2$& first (short) root crossed & 22,185 & 0.06 seconds \\
$G_2$& last (long) root crossed &  14,072 & 0.04 seconds\\
\hline
\end{tabular}

\onlineVersionOnly{
Table \ref{tableTheTable} was generated using the following commands of the on-line interface of our program \cite{Milev:vpf}.

\noindent \href{http://cartan.math.umb.edu/vpf/cgi-bin/calculator?textInput=MapSemisimpleLieAlgebraInWeylAlgebra\%7B\%7D\%28G\_2\%2C+\%280\%2C0\%29\%2C+\%28+1\%2C+0\%29\%29\%3B+&buttonGo=Go}{MapSemisimpleLieAlgebraInWeylAlgebra(G\_2,(0,0),( 1, 0));}

\noindent \href{http://cartan.math.umb.edu/vpf/cgi-bin/calculator?textInput=MapSemisimpleLieAlgebraInWeylAlgebra\%7B\%7D\%28G\_2\%2C+\%280\%2C0\%29\%2C+\%28+0\%2C+1\%29\%29\%3B+}{MapSemisimpleLieAlgebraInWeylAlgebra(G\_2,(0,0),(0,1));}

\noindent \href{http://cartan.math.umb.edu/vpf/cgi-bin/calculator?textInput=MapSemisimpleLieAlgebraInWeylAlgebra\%7B\%7D\%28F\_4\%2C+\%280\%2C0\%2C0\%2C0\%29\%2C+\%28+0\%2C0\%2C0\%2C+1\%29\%29\%3B+&buttonGo=Go}{MapSemisimpleLieAlgebraInWeylAlgebra(F\_4,(0,0,0,0),(0,0,0,1));}

\noindent \href{http://cartan.math.umb.edu/vpf/cgi-bin/calculator?textInput=MapSemisimpleLieAlgebraInWeylAlgebra\%7B\%7D\%28F\_4\%2C+\%280\%2C0\%2C0\%2C0\%29\%2C+\%28+1\%2C0\%2C0\%2C+0\%29\%29\%3B+}{MapSemisimpleLieAlgebraInWeylAlgebra(F\_4, (0,0,0,0), ( 1,0,0, 0)); }

\noindent \href{http://cartan.math.umb.edu/vpf/cgi-bin/calculator?textInput=MapSemisimpleLieAlgebraInWeylAlgebra\%7B\%7D\%28E\_6\%2C+\%280\%2C0\%2C0\%2C0\%2C0\%2C0\%29\%2C+\%28+1\%2C0\%2C0\%2C0\%2C0\%2C+0\%29\%29\%3B+}{MapSemisimpleLieAlgebraInWeylAlgebra(E\_6, (0,0,0,0,0,0), ( 1,0,0,0,0, 0)); }

\noindent \href{http://cartan.math.umb.edu/vpf/cgi-bin/calculator?textInput=MapSemisimpleLieAlgebraInWeylAlgebra\%7B\%7D\%28E\_7\%2C+\%280\%2C0\%2C0\%2C0\%2C0\%2C0\%2C0\%29\%2C+\%28+0\%2C0\%2C0\%2C0\%2C0\%2C+0\%2C1\%29\%29\%3B+&buttonGo=Go}{MapSemisimpleLieAlgebraInWeylAlgebra(E\_7, (0,0,0,0,0,0,0), ( 0,0,0,0,0, 0,1)); }

\noindent
MapSemisimpleLieAlgebraInWeylAlgebra\{\}(E\_8, (0,0,0,0,0,0,0,0), ( 0,0,0,0,0, 0,0,1)); 

}
\onlineVersionOnly{
\noindent We note that in our C++ implementation, we represent elements of $ U({\mathfrak g})$, polynomials, elements of $\mathbb W_n$, elements of $\simeq V_\lambda({\mathfrak l})\otimes V_\lambda({\mathfrak l})^* \simeq  \End V_\lambda({\mathfrak l}) $ and elements of $ \mathbb W_n\otimes \End V_{\lambda}({\mathfrak l}) $ using a single C++  monomial collections class. This allows us to reuse all linear algebra and memory management routines (hash tables, etc.). The monomial collections that have the property that a product of two monomials is a monomial use in addition shared code for multiplication\footnote{This is again achieved using C++ templates}.
}

We finish this section by noting there is a simple consistency check for our computer computations. Let ${\mathfrak g}'\subset\mathbb W_n\otimes \End V_\lambda({\mathfrak l})$ be the Lie algebra generated by the differential operators corresponding to simple generators as constructed above. As asserted by Proposition \ref{propGenVermaIsInWeylPlusEnd}, we have that ${\mathfrak g}\simeq{\mathfrak g}'$. In particular we can check that $\dim {\mathfrak g}'=\dim{\mathfrak g}$: this makes sense as the computer code that computes the differential operator Lie algebra ${\mathfrak g}'$ is independent from the code that carries out our algorithm. In this way we are testing two independent computer programs against one another,  and the output of both programs against the already known dimension $\dim {\mathfrak g}$. All printouts in Table \ref{tableTheTable} have passed such a consistency check.

\section{Example: the embedding $G_2\hookrightarrow \mathbb W_5$}\label{secTheConstructionCaseG2}
In the present section we illustrate the computation of $G_2\hookrightarrow \mathbb W_5$, as reported by our computer program. A basis of $G_2$ is computed in Table \ref{tableGtwoStructure}. The elements $h_i$ are dual to the simple roots of $G_2$, and the elements $g_i$ together with $h_1, \frac{1}{3} h_2$ form a Chevalley-Weyl basis of $G_2$. The generators $g_i$ are indexed according to the index of the corresponding root space, and the root spaces are indexed according to their order in the simple coordinate graded lexicographic order.

Both proper parabolic subalgebras of $G_2$ are maximal. We carry out the computations for the parabolic subalgebra obtained by crossing out the short root. A basis for the nilradical ${\mathfrak n}_-$ is given by $g_{-6},g_{-5}, g_{-4}, g_{-3}, g_{-1}$. An arbitrary monomial in $U({\mathfrak n}_-)$ is then a multiple of 
\[
u:=g_{-6}^{a_{1}}g_{-5}^{a_{2}}g_{-4}^{a_{3}}g_{-3}^{a_{4}}g_{-1}^{a_{5}}\quad .
\] 
Applying consecutively the identities of Lemma \ref{leCommuteBmaroundA} according to Section \ref{secComputerWork}, we can compute that 
\[
\begin{array}{rcl}
g_1 u&=&-3a_{4}g_{-6}^{a_{1}}g_{-5}^{a_{2}}g_{-4}^{a_{3}}g_{-3}^{a_{4}-1}g_{-1}^{a_{5}}g_{-2}+a_{5}g_{-6}^{a_{1}}g_{-5}^{a_{2}}g_{-4}^{a_{3}}g_{-3}^{a_{4}}g_{-1}^{a_{5}-1}h_{1}\\
&&+g_{-6}^{a_{1}}g_{-5}^{a_{2}}g_{-4}^{a_{3}}g_{-3}^{a_{4}}g_{-1}^{a_{5}}g_{1}  +(-3a_{3}^{2}+3a_{3})g_{-6}^{a_{1}+1}g_{-5}^{a_{2}}g_{-4}^{a_{3}-2}g_{-3}^{a_{4}}g_{-1}^{a_{5}} \\
&&+
(9a_{4}^{2}a_{5}^{2}-9a_{4}a_{5}^{2}-9a_{4}^{2}a_{5}+9a_{4}a_{5})g_{-6}^{a_{1}+1}g_{-5}^{a_{2}}g_{-4}^{a_{3}}g_{-3}^{a_{4}-2}g_{-1}^{a_{5}-2} \\
&&-a_{2}g_{-6}^{a_{1}}g_{-5}^{a_{2}-1}g_{-4}^{a_{3}+1}g_{-3}^{a_{4}}g_{-1}^{a_{5}}\\
&&+(-3a_{4}a_{5}^{3}+9a_{4}a_{5}^{2}-6a_{4}a_{5})g_{-6}^{a_{1}}g_{-5}^{a_{2}+1}g_{-4}^{a_{3}}g_{-3}^{a_{4}-1}g_{-1}^{a_{5}-3} \\&& -2a_{3}g_{-6}^{a_{1}}g_{-5}^{a_{2}}g_{-4}^{a_{3}-1}g_{-3}^{a_{4}+1}g_{-1}^{a_{5}}\\&&
+(3a_{4}a_{5}^{2}-3a_{4}a_{5})g_{-6}^{a_{1}}g_{-5}^{a_{2}}g_{-4}^{a_{3}+1}g_{-3}^{a_{4}-1}g_{-1}^{a_{5}-2} \\&& +(-a_{5}^{2}-3a_{4}a_{5}+a_{5})g_{-6}^{a_{1}}g_{-5}^{a_{2}}g_{-4}^{a_{3}}g_{-3}^{a_{4}}g_{-1}^{a_{5}-1}\\
g_{-1}u&=& (-3a_{4}^{2}+3a_{4})g_{-6}^{a_{1}+1}g_{-5}^{a_{2}}g_{-4}^{a_{3}}g_{-3}^{a_{4}-2}g_{-1}^{a_{5}}-3a_{3}g_{-6}^{a_{1}}g_{-5}^{a_{2}+1}g_{-4}^{a_{3}-1}g_{-3}^{a_{4}}g_{-1}^{a_{5}}\\&&-2a_{4}g_{-6}^{a_{1}}g_{-5}^{a_{2}}g_{-4}^{a_{3}+1}g_{-3}^{a_{4}-1}g_{-1}^{a_{5}}  +g_{-6}^{a_{1}}g_{-5}^{a_{2}}g_{-4}^{a_{3}}g_{-3}^{a_{4}}g_{-1}^{a_{5}+1}\\
g_2 u&=&
g_{-6}^{a_{1}}g_{-5}^{a_{2}}g_{-4}^{a_{3}}g_{-3}^{a_{4}}g_{-1}^{a_{5}}g_{2}-a_{1}g_{-6}^{a_{1}-1}g_{-5}^{a_{2}+1}g_{-4}^{a_{3}}g_{-3}^{a_{4}}g_{-1}^{a_{5}}\\&&
+(-2a_{4}^{3}+6a_{4}^{2}-4a_{4})g_{-6}^{a_{1}+1}g_{-5}^{a_{2}}g_{-4}^{a_{3}}g_{-3}^{a_{4}-3}g_{-1}^{a_{5}} \\&& +(-a_{4}^{2}+a_{4})g_{-6}^{a_{1}}g_{-5}^{a_{2}}g_{-4}^{a_{3}+1}g_{-3}^{a_{4}-2}g_{-1}^{a_{5}}+a_{4}g_{-6}^{a_{1}}g_{-5}^{a_{2}}g_{-4}^{a_{3}}g_{-3}^{a_{4}-1}g_{-1}^{a_{5}+1}\\
g_{-2}u&=&
g_{-6}^{a_{1}}g_{-5}^{a_{2}}g_{-4}^{a_{3}}g_{-3}^{a_{4}}g_{-1}^{a_{5}}g_{-2}-a_{2}g_{-6}^{a_{1}+1}g_{-5}^{a_{2}-1}g_{-4}^{a_{3}}g_{-3}^{a_{4}}g_{-1}^{a_{5}}\\&&
+(-3a_{4}a_{5}^{2}+3a_{4}a_{5})g_{-6}^{a_{1}+1}g_{-5}^{a_{2}}g_{-4}^{a_{3}}g_{-3}^{a_{4}-1}g_{-1}^{a_{5}-2} \\&& +(a_{5}^{3}-3a_{5}^{2}+2a_{5})g_{-6}^{a_{1}}g_{-5}^{a_{2}+1}g_{-4}^{a_{3}}g_{-3}^{a_{4}}g_{-1}^{a_{5}-3}\\&&
+(-a_{5}^{2}+a_{5})g_{-6}^{a_{1}}g_{-5}^{a_{2}}g_{-4}^{a_{3}+1}g_{-3}^{a_{4}}g_{-1}^{a_{5}-2} \\&& +a_{5}g_{-6}^{a_{1}}g_{-5}^{a_{2}}g_{-4}^{a_{3}}g_{-3}^{a_{4}+1}g_{-1}^{a_{5}-1}\quad .
\end{array}
\]
Applying the map $\varphi_0$ to the above expressions amounts to renaming the generators $g_{-6},g_{-5}, g_{-4}, g_{-3}, g_{-1}$ to $x_1,\dots, x_5$, replacing the remaining generators by 0, and finally tensoring on the right with $m_1$. We recall from Lemma \ref{leAdjointActionIsCompatible} that if one of the so obtained monomials is of the form $p(a_1, \dots, a_5)x_1^{a_1-b_1}\dots x_5^{a_5-b_5}$ and $b_i>0$, then $p(a_1, \dots, a_5)$ is divisible by $a_i(a_i-1)\dots (a_i-b_i+1)$. Now we can apply \eqref{eqCaseBkNegative} and \eqref{eqCaseBkNonnegative} from Lemma \ref{leWriteAsDiffOperator} to get that
\[
\begin{array}{rcl}
\Phi_0 (g_1)&=& (-3x_{2}\partial_{4}\partial_{5}^{3}+9x_{1}\partial_{4}^{2}\partial_{5}^{2}  +3x_{3}\partial_{4}\partial_{5}^{2}-x_{5}\partial_{5}^{2} \\&& -3x_{4}\partial_{4}\partial_{5}-3x_{1}\partial_{3}^{2}  -2x_{4}\partial_{3}-x_{3}\partial_{2})\otimes \id\\
\Phi_0 (g_{-1}) &=&(-3x_{1}\partial_{4}^{2}-2x_{3}\partial_{4} -3x_{2}\partial_{3}+x_{5})\otimes \id\\
\Phi_0 (g_2)&=&(-2x_{1}\partial_{4}^{3}-x_{3}\partial_{4}^{2} +x_{5}\partial_{4}-x_{2}\partial_{1})\otimes \id\\
\Phi_0(g_{-2})&=&(x_{2}\partial_{5}^{3}-3x_{1}\partial_{4}\partial_{5}^{2}  -x_{3}\partial_{5}^{2}+x_{4}\partial_{5}-x_{1}\partial_{2})\otimes \id\quad .
\end{array}
\]
\begin{landscape}

Table \ref{tableTheTable}. Elements of $\mathbb W_n$ generating the exceptional Lie algebras. Each entry of the table is a simple Chevalley-Weyl generator, given in the order $g_1, g_{-1}, g_{2}, g_{-2},\dots$. Here, $g_{\pm i}$ stands for the Chevalley-Weyl generator of the simple root whose $i^{th}$ coordinate is non-zero. The simple roots are ordered in the same order as the one implied by \cite[page 59]{Humphreys}. For $G_2$ and $F_4$ there are two entries as $G_2$ and $F_4$ each have two non-isomorphic parabolic subalgebras of maximal dimension. For $G_2, F_4$, the first entry in the table corresponds to first root being crossed out (the first root is short for $G_2$ and long for $F_4$). The other entry corresponds to long root being crossed out.
\begin{longtable}
{rll}\label{tableTheTable}
${\mathfrak g}$& $n$& \multicolumn{1}{c}{ element of $\mathbb W_n\simeq \mathbb W_n\otimes \id$} \\
\endhead

\hline\multirow{4}{*}{$G_2$} & \multirow{4}{*}{5}& 
$\begin{array}{l}(-3x_{2}\partial_{4}\partial_{5}^{3}+9x_{1}\partial_{4}^{2}\partial_{5}^{2}+3x_{3}\partial_{4}\partial_{5}^{2}-x_{5}\partial_{5}^{2}-3x_{4}\partial_{4}\partial_{5}-3x_{1}\partial_{3}^{2}-2x_{4}\partial_{3}-x_{3}\partial_{2})\otimes id\end{array}$\\\cline{3-3} 
&& $\begin{array}{l}(-3x_{1}\partial_{4}^{2}-2x_{3}\partial_{4}-3x_{2}\partial_{3}+x_{5})\otimes id\end{array}$\\\cline{3-3} 
&& $\begin{array}{l}(-2x_{1}\partial_{4}^{3}-x_{3}\partial_{4}^{2}+x_{5}\partial_{4}-x_{2}\partial_{1})\otimes id\end{array}$\\\cline{3-3} 
&& $\begin{array}{l}(x_{2}\partial_{5}^{3}-3x_{1}\partial_{4}\partial_{5}^{2}-x_{3}\partial_{5}^{2}+x_{4}\partial_{5}-x_{1}\partial_{2})\otimes id\end{array}$\\
\hline\multirow{4}{*}{ $G_2$} & \multirow{4}{*}{5}& $\begin{array}{l}(-3x_{5}\partial_{4}-3x_{1}\partial_{3}^{2}-2x_{4}\partial_{3}-x_{3}\partial_{2})\otimes id\end{array}$\\ \cline{3-3} 
&& $\begin{array}{l}(-x_{4}\partial_{5}-3x_{1}\partial_{4}^{2}-2x_{3}\partial_{4}-3x_{2}\partial_{3})\otimes id\end{array}$\\ \cline{3-3} 
&& $\begin{array}{l}(-x_{5}\partial_{5}^{2}-x_{4}\partial_{4}\partial_{5}-2x_{1}\partial_{4}^{3}-x_{3}\partial_{4}^{2}-x_{2}\partial_{1})\otimes id\end{array}$\\ \cline{3-3} 
&& $\begin{array}{l}(-x_{1}\partial_{2}+x_{5})\otimes id\end{array}$\\ \hline 
\hline\multirow{8}{*}{$F_4$} & \multirow{8}{*}{15}& $\begin{array}{l}(-x_{15}\partial_{15}^{2}-x_{14}\partial_{14}\partial_{15}-x_{13}\partial_{13}\partial_{15}-x_{12}\partial_{12}\partial_{15}-x_{11}\partial_{11}\partial_{15}+x_{1}\partial_{7}\partial_{10}\partial_{15}-x_{9}\partial_{9}\partial_{15}-x_{7}\partial_{7}\partial_{15}+x_{1}\partial_{7}\partial_{12}\partial_{14} \\ +x_{10}\partial_{12}\partial_{14}+2x_{1}\partial_{9}^{2}\partial_{14}+x_{8}\partial_{9}\partial_{14}+x_{5}\partial_{7}\partial_{14}+x_{1}\partial_{7}\partial_{13}^{2}+x_{10}\partial_{13}^{2}+4x_{1}\partial_{9}\partial_{11}\partial_{13}+x_{8}\partial_{11}\partial_{13}-x_{6}\partial_{9}\partial_{13}-x_{4}\partial_{7}\partial_{13} \\ -2x_{1}\partial_{11}^{2}\partial_{12}+x_{6}\partial_{11}\partial_{12}-x_{3}\partial_{7}\partial_{12}-x_{5}\partial_{11}^{2}-x_{4}\partial_{9}\partial_{11}-x_{2}\partial_{7}\partial_{10}-x_{3}\partial_{9}^{2}-x_{2}\partial_{1})\otimes id\end{array}$\\\cline{3-3} 
&& $\begin{array}{l}(-x_{1}\partial_{2}+x_{15})\otimes id\end{array}$\\\cline{3-3} 
&& $\begin{array}{l}(x_{15}\partial_{14}-x_{12}\partial_{10}-x_{1}\partial_{8}^{2}-x_{9}\partial_{8}-x_{7}\partial_{5}-x_{3}\partial_{2})\otimes id\end{array}$\\\cline{3-3} 
&& $\begin{array}{l}(x_{14}\partial_{15}-x_{10}\partial_{12}-x_{1}\partial_{9}^{2}-x_{8}\partial_{9}-x_{5}\partial_{7}-x_{2}\partial_{3})\otimes id\end{array}$\\\cline{3-3} 
&& $\begin{array}{l}(2x_{14}\partial_{13}-x_{13}\partial_{12}-x_{11}\partial_{9}-x_{8}\partial_{6}-2x_{5}\partial_{4}-x_{4}\partial_{3})\otimes id\end{array}$\\\cline{3-3} 
&& $\begin{array}{l}(x_{13}\partial_{14}-2x_{12}\partial_{13}-x_{9}\partial_{11}-x_{6}\partial_{8}-x_{4}\partial_{5}-2x_{3}\partial_{4})\otimes id\end{array}$\\\cline{3-3} 
&& $\begin{array}{l}(x_{13}\partial_{11}+2x_{12}\partial_{9}-2x_{1}\partial_{7}\partial_{8}+2x_{10}\partial_{8}-x_{9}\partial_{7}-x_{8}\partial_{5}-x_{6}\partial_{4})\otimes id\end{array}$\\\cline{3-3} 
&& $\begin{array}{l}(x_{11}\partial_{13}+x_{9}\partial_{12}+2x_{1}\partial_{9}\partial_{10}+x_{8}\partial_{10}-2x_{7}\partial_{9}-2x_{5}\partial_{8}-x_{4}\partial_{6})\otimes id\end{array}$\\

\newpage

\hline\multirow{8}{*}{$F_4$} & \multirow{8}{*}{15}& $\begin{array}{l}(-x_{5}\partial_{12}^{2}-x_{13}\partial_{12}-x_{3}\partial_{10}^{2}-x_{11}\partial_{10}-x_{7}\partial_{6}-x_{2}\partial_{1})\otimes id\end{array}$\\ \cline{3-3} 
&& $\begin{array}{l}(-x_{5}\partial_{13}^{2}-x_{12}\partial_{13}-x_{3}\partial_{11}^{2}-x_{10}\partial_{11}-x_{6}\partial_{7}-x_{1}\partial_{2})\otimes id\end{array}$\\ \cline{3-3} 
&& $\begin{array}{l}(-x_{7}\partial_{13}^{2}-x_{14}\partial_{13}-x_{1}\partial_{9}^{2}-x_{10}\partial_{9}-x_{6}\partial_{5}-x_{3}\partial_{2})\otimes id\end{array}$\\ \cline{3-3} 
&& $\begin{array}{l}(-x_{7}\partial_{14}^{2}-x_{13}\partial_{14}-x_{1}\partial_{10}^{2}-x_{9}\partial_{10}-x_{5}\partial_{6}-x_{2}\partial_{3})\otimes id\end{array}$\\ \cline{3-3} 
&& $\begin{array}{l}(-x_{15}\partial_{14}-2x_{5}\partial_{11}\partial_{12}+x_{4}\partial_{10}\partial_{11}-x_{13}\partial_{11}-x_{12}\partial_{10}-x_{9}\partial_{8}-2x_{5}\partial_{4}-x_{4}\partial_{3})\otimes id\end{array}$\\ \cline{3-3} 
&& $\begin{array}{l}(-x_{14}\partial_{15}+x_{4}\partial_{12}\partial_{13}-x_{11}\partial_{13}-2x_{3}\partial_{11}\partial_{12}-x_{10}\partial_{12}-x_{8}\partial_{9}-x_{4}\partial_{5}-2x_{3}\partial_{4})\otimes id\end{array}$\\ \cline{3-3} 
&& $\begin{array}{l}(2x_{7}\partial_{11}\partial_{15}^{2}+2x_{6}\partial_{10}\partial_{15}^{2}+2x_{5}\partial_{9}\partial_{15}^{2}-x_{15}\partial_{15}^{2}-2x_{7}\partial_{13}\partial_{14}\partial_{15}-2x_{6}\partial_{12}\partial_{14}\partial_{15}+2x_{4}\partial_{9}\partial_{14}\partial_{15}-x_{14}\partial_{14}\partial_{15} \\ -2x_{5}\partial_{12}\partial_{13}\partial_{15}-2x_{4}\partial_{10}\partial_{13}\partial_{15}-x_{13}\partial_{13}\partial_{15}+2x_{4}\partial_{11}\partial_{12}\partial_{15}-x_{12}\partial_{12}\partial_{15}-4x_{3}\partial_{10}\partial_{11}\partial_{15}-4x_{2}\partial_{9}\partial_{11}\partial_{15}-2x_{11}\partial_{11}\partial_{15} \\ -4x_{1}\partial_{9}\partial_{10}\partial_{15}-2x_{10}\partial_{10}\partial_{15}-2x_{9}\partial_{9}\partial_{15}+2x_{3}\partial_{9}\partial_{14}^{2}+4x_{2}\partial_{9}\partial_{13}\partial_{14}+x_{11}\partial_{13}\partial_{14}+2x_{3}\partial_{11}\partial_{12}\partial_{14}+4x_{1}\partial_{9}\partial_{12}\partial_{14} \\ +x_{10}\partial_{12}\partial_{14}-2x_{8}\partial_{9}\partial_{14}-2x_{2}\partial_{10}\partial_{13}^{2}+2x_{2}\partial_{11}\partial_{12}\partial_{13}-2x_{1}\partial_{10}\partial_{12}\partial_{13}+x_{9}\partial_{12}\partial_{13}+2x_{8}\partial_{10}\partial_{13}+2x_{1}\partial_{11}\partial_{12}^{2}-2x_{8}\partial_{11}\partial_{12} \\ -2x_{3}\partial_{7}\partial_{10}-2x_{2}\partial_{7}\partial_{9}-2x_{1}\partial_{6}\partial_{9}-x_{11}\partial_{7}-x_{10}\partial_{6}-x_{9}\partial_{5}-x_{8}\partial_{4})\otimes id\end{array}$\\ \cline{3-3} 
&& $\begin{array}{l}(-2x_{7}\partial_{11}-2x_{6}\partial_{10}-2x_{5}\partial_{9}-x_{4}\partial_{8}+x_{15})\otimes id\end{array}$\\ \hline

\hline\multirow{12}{*}{$E_6$} & \multirow{12}{*}{16}& $\begin{array}{l}(-x_{16}\partial_{16}^{2}-x_{15}\partial_{15}\partial_{16}-x_{14}\partial_{14}\partial_{16}-x_{13}\partial_{13}\partial_{16}-x_{12}\partial_{12}\partial_{16}-x_{11}\partial_{11}\partial_{16}-x_{10}\partial_{10}\partial_{16}-x_{9}\partial_{9}\partial_{16}-x_{8}\partial_{8}\partial_{16}-x_{6}\partial_{6}\partial_{16} \\ -x_{4}\partial_{4}\partial_{16}+x_{7}\partial_{9}\partial_{15}+x_{5}\partial_{6}\partial_{15}+x_{3}\partial_{4}\partial_{15}+x_{7}\partial_{11}\partial_{14}+x_{5}\partial_{8}\partial_{14}-x_{2}\partial_{4}\partial_{14}-x_{7}\partial_{12}\partial_{13}-x_{5}\partial_{10}\partial_{13}-x_{1}\partial_{4}\partial_{13} \\ -x_{3}\partial_{8}\partial_{12}-x_{2}\partial_{6}\partial_{12}+x_{3}\partial_{10}\partial_{11}-x_{1}\partial_{6}\partial_{11}+x_{2}\partial_{9}\partial_{10}+x_{1}\partial_{8}\partial_{9})\otimes id\end{array}$\\ \cline{3-3} 
&& $\begin{array}{l}x_{16}\otimes id\end{array}$\\ \cline{3-3} 
&& $\begin{array}{l}(-x_{14}\partial_{13}-x_{12}\partial_{11}-x_{10}\partial_{8}-x_{2}\partial_{1})\otimes id\end{array}$\\ \cline{3-3} 
&& $\begin{array}{l}(-x_{13}\partial_{14}-x_{11}\partial_{12}-x_{8}\partial_{10}-x_{1}\partial_{2})\otimes id\end{array}$\\ \cline{3-3} 
&& $\begin{array}{l}(x_{16}\partial_{15}-x_{9}\partial_{7}-x_{6}\partial_{5}-x_{4}\partial_{3})\otimes id\end{array}$\\ \cline{3-3} 
&& $\begin{array}{l}(x_{15}\partial_{16}-x_{7}\partial_{9}-x_{5}\partial_{6}-x_{3}\partial_{4})\otimes id\end{array}$\\ \cline{3-3} 
&& $\begin{array}{l}(x_{15}\partial_{14}-x_{11}\partial_{9}-x_{8}\partial_{6}-x_{3}\partial_{2})\otimes id\end{array}$\\ \cline{3-3} 
&& $\begin{array}{l}(x_{14}\partial_{15}-x_{9}\partial_{11}-x_{6}\partial_{8}-x_{2}\partial_{3})\otimes id\end{array}$\\ \cline{3-3} 
&& $\begin{array}{l}(x_{14}\partial_{12}+x_{13}\partial_{11}-x_{6}\partial_{4}-x_{5}\partial_{3})\otimes id\end{array}$\\ \cline{3-3} 
&& $\begin{array}{l}(x_{12}\partial_{14}+x_{11}\partial_{13}-x_{4}\partial_{6}-x_{3}\partial_{5})\otimes id\end{array}$\\ \cline{3-3} 
&& $\begin{array}{l}(x_{12}\partial_{10}+x_{11}\partial_{8}+x_{9}\partial_{6}+x_{7}\partial_{5})\otimes id\end{array}$\\ \cline{3-3} 
&& $\begin{array}{l}(x_{10}\partial_{12}+x_{8}\partial_{11}+x_{6}\partial_{9}+x_{5}\partial_{7})\otimes id\end{array}$\\ \hline

\newpage
\multirow{14}{*}{$E_7$} & \multirow{14}{*}{27}& $\begin{array}{l}(-x_{22}\partial_{21}-x_{20}\partial_{19}-x_{18}\partial_{17}-x_{16}\partial_{14}-x_{13}\partial_{11}-x_{2}\partial_{1})\otimes id\end{array}$\\ \cline{3-3} 
&& $\begin{array}{l}(-x_{21}\partial_{22}-x_{19}\partial_{20}-x_{17}\partial_{18}-x_{14}\partial_{16}-x_{11}\partial_{13}-x_{1}\partial_{2})\otimes id\end{array}$\\ \cline{3-3} 
&& $\begin{array}{l}(-x_{24}\partial_{23}-x_{22}\partial_{20}-x_{21}\partial_{19}-x_{10}\partial_{8}-x_{7}\partial_{6}-x_{5}\partial_{4})\otimes id\end{array}$\\ \cline{3-3} 
&& $\begin{array}{l}(-x_{23}\partial_{24}-x_{20}\partial_{22}-x_{19}\partial_{21}-x_{8}\partial_{10}-x_{6}\partial_{7}-x_{4}\partial_{5})\otimes id\end{array}$\\ \cline{3-3} 
&& $\begin{array}{l}(-x_{24}\partial_{22}-x_{23}\partial_{20}-x_{17}\partial_{15}-x_{14}\partial_{12}-x_{11}\partial_{9}-x_{3}\partial_{2})\otimes id\end{array}$\\ \cline{3-3} 
&& $\begin{array}{l}(-x_{22}\partial_{24}-x_{20}\partial_{23}-x_{15}\partial_{17}-x_{12}\partial_{14}-x_{9}\partial_{11}-x_{2}\partial_{3})\otimes id\end{array}$\\ \cline{3-3} 
&& $\begin{array}{l}(-x_{25}\partial_{24}-x_{20}\partial_{18}-x_{19}\partial_{17}-x_{12}\partial_{10}-x_{9}\partial_{7}-x_{4}\partial_{3})\otimes id\end{array}$\\ \cline{3-3} 
&& $\begin{array}{l}(-x_{24}\partial_{25}-x_{18}\partial_{20}-x_{17}\partial_{19}-x_{10}\partial_{12}-x_{7}\partial_{9}-x_{3}\partial_{4})\otimes id\end{array}$\\ \cline{3-3} 
&& $\begin{array}{l}(-x_{26}\partial_{25}-x_{18}\partial_{16}-x_{17}\partial_{14}-x_{15}\partial_{12}-x_{7}\partial_{5}-x_{6}\partial_{4})\otimes id\end{array}$\\ \cline{3-3} 
&& $\begin{array}{l}(-x_{25}\partial_{26}-x_{16}\partial_{18}-x_{14}\partial_{17}-x_{12}\partial_{15}-x_{5}\partial_{7}-x_{4}\partial_{6})\otimes id\end{array}$\\ \cline{3-3} 
&& $\begin{array}{l}(-x_{27}\partial_{26}-x_{16}\partial_{13}-x_{14}\partial_{11}-x_{12}\partial_{9}-x_{10}\partial_{7}-x_{8}\partial_{6})\otimes id\end{array}$\\ \cline{3-3} 
&& $\begin{array}{l}(-x_{26}\partial_{27}-x_{13}\partial_{16}-x_{11}\partial_{14}-x_{9}\partial_{12}-x_{7}\partial_{10}-x_{6}\partial_{8})\otimes id\end{array}$\\ \cline{3-3} 
&& $\begin{array}{l}(-x_{27}\partial_{27}^{2}-x_{26}\partial_{26}\partial_{27}-x_{25}\partial_{25}\partial_{27}-x_{24}\partial_{24}\partial_{27}-x_{23}\partial_{23}\partial_{27}-x_{22}\partial_{22}\partial_{27}-x_{21}\partial_{21}\partial_{27}-x_{20}\partial_{20}\partial_{27}-x_{19}\partial_{19}\partial_{27}-x_{18}\partial_{18}\partial_{27} \\ -x_{17}\partial_{17}\partial_{27}-x_{16}\partial_{16}\partial_{27}-x_{15}\partial_{15}\partial_{27}-x_{14}\partial_{14}\partial_{27}-x_{12}\partial_{12}\partial_{27}-x_{10}\partial_{10}\partial_{27}-x_{8}\partial_{8}\partial_{27}-x_{13}\partial_{16}\partial_{26}-x_{11}\partial_{14}\partial_{26}-x_{9}\partial_{12}\partial_{26} \\ -x_{7}\partial_{10}\partial_{26}-x_{6}\partial_{8}\partial_{26}+x_{13}\partial_{18}\partial_{25}+x_{11}\partial_{17}\partial_{25}+x_{9}\partial_{15}\partial_{25}-x_{5}\partial_{10}\partial_{25}-x_{4}\partial_{8}\partial_{25}-x_{13}\partial_{20}\partial_{24}-x_{11}\partial_{19}\partial_{24}+x_{7}\partial_{15}\partial_{24} \\ +x_{5}\partial_{12}\partial_{24}-x_{3}\partial_{8}\partial_{24}+x_{13}\partial_{22}\partial_{23}+x_{11}\partial_{21}\partial_{23}+x_{6}\partial_{15}\partial_{23}+x_{4}\partial_{12}\partial_{23}+x_{3}\partial_{10}\partial_{23}-x_{9}\partial_{19}\partial_{22}-x_{7}\partial_{17}\partial_{22}-x_{5}\partial_{14}\partial_{22} \\ -x_{2}\partial_{8}\partial_{22}+x_{9}\partial_{20}\partial_{21}+x_{7}\partial_{18}\partial_{21}+x_{5}\partial_{16}\partial_{21}-x_{1}\partial_{8}\partial_{21}-x_{6}\partial_{17}\partial_{20}-x_{4}\partial_{14}\partial_{20}+x_{2}\partial_{10}\partial_{20}+x_{6}\partial_{18}\partial_{19}+x_{4}\partial_{16}\partial_{19} \\ +x_{1}\partial_{10}\partial_{19}-x_{3}\partial_{14}\partial_{18}-x_{2}\partial_{12}\partial_{18}+x_{3}\partial_{16}\partial_{17}-x_{1}\partial_{12}\partial_{17}+x_{2}\partial_{15}\partial_{16}+x_{1}\partial_{14}\partial_{15})\otimes id\end{array}$\\ \cline{3-3} 
&& $\begin{array}{l}x_{27}\otimes id\end{array}$\\ \hline 

\newpage
\hline\multirow{16}{*}{$E_8$} & \multirow{16}{*}{57}& $\begin{array}{l}(-x_{51}\partial_{50}-x_{49}\partial_{48}-x_{47}\partial_{46}-x_{45}\partial_{43}-x_{42}\partial_{40}-x_{39}\partial_{36}-x_{23}\partial_{20}-x_{19}\partial_{17}-x_{16}\partial_{14}-x_{13}\partial_{12}-x_{11}\partial_{10}-x_{9}\partial_{8})\otimes id\end{array}$\\ \cline{3-3} 
&& $\begin{array}{l}(-x_{50}\partial_{51}-x_{48}\partial_{49}-x_{46}\partial_{47}-x_{43}\partial_{45}-x_{40}\partial_{42}-x_{36}\partial_{39}-x_{20}\partial_{23}-x_{17}\partial_{19}-x_{14}\partial_{16}-x_{12}\partial_{13}-x_{10}\partial_{11}-x_{8}\partial_{9})\otimes id\end{array}$\\ \cline{3-3} 
&& $\begin{array}{l}(-x_{53}\partial_{52}-x_{51}\partial_{49}-x_{50}\partial_{48}-x_{38}\partial_{35}-x_{34}\partial_{32}-x_{1}\partial_{28}\partial_{29}-x_{31}\partial_{29}-x_{30}\partial_{28}-x_{27}\partial_{25}-x_{24}\partial_{21}-x_{11}\partial_{9}-x_{10}\partial_{8}-x_{7}\partial_{6})\otimes id\end{array}$\\ \cline{3-3} 
&& $\begin{array}{l}(-x_{52}\partial_{53}-x_{49}\partial_{51}-x_{48}\partial_{50}-x_{35}\partial_{38}-x_{32}\partial_{34}-x_{1}\partial_{30}\partial_{31}-x_{29}\partial_{31}-x_{28}\partial_{30}-x_{25}\partial_{27}-x_{21}\partial_{24}-x_{9}\partial_{11}-x_{8}\partial_{10}-x_{6}\partial_{7})\otimes id\end{array}$\\ \cline{3-3} 
&& $\begin{array}{l}(-x_{53}\partial_{51}-x_{52}\partial_{49}-x_{46}\partial_{44}-x_{43}\partial_{41}-x_{40}\partial_{37}-x_{36}\partial_{33}-x_{26}\partial_{23}-x_{22}\partial_{19}-x_{18}\partial_{16}-x_{15}\partial_{13}-x_{10}\partial_{7}-x_{8}\partial_{6})\otimes id\end{array}$\\ \cline{3-3} 
&& $\begin{array}{l}(-x_{51}\partial_{53}-x_{49}\partial_{52}-x_{44}\partial_{46}-x_{41}\partial_{43}-x_{37}\partial_{40}-x_{33}\partial_{36}-x_{23}\partial_{26}-x_{19}\partial_{22}-x_{16}\partial_{18}-x_{13}\partial_{15}-x_{7}\partial_{10}-x_{6}\partial_{8})\otimes id\end{array}$\\ \cline{3-3} 
&& $\begin{array}{l}(-x_{54}\partial_{53}-x_{49}\partial_{47}-x_{48}\partial_{46}-x_{41}\partial_{38}-x_{37}\partial_{34}-x_{33}\partial_{30}-x_{29}\partial_{26}-x_{25}\partial_{22}-x_{21}\partial_{18}-x_{13}\partial_{11}-x_{12}\partial_{10}-x_{6}\partial_{5})\otimes id\end{array}$\\ \cline{3-3} 
&& $\begin{array}{l}(-x_{53}\partial_{54}-x_{47}\partial_{49}-x_{46}\partial_{48}-x_{38}\partial_{41}-x_{34}\partial_{37}-x_{30}\partial_{33}-x_{26}\partial_{29}-x_{22}\partial_{25}-x_{18}\partial_{21}-x_{11}\partial_{13}-x_{10}\partial_{12}-x_{5}\partial_{6})\otimes id\end{array}$\\ \cline{3-3} 
&& $\begin{array}{l}(-x_{55}\partial_{54}-x_{47}\partial_{45}-x_{46}\partial_{43}-x_{44}\partial_{41}-x_{34}\partial_{31}-x_{1}\partial_{27}\partial_{29}-x_{32}\partial_{29}-x_{30}\partial_{27}-x_{28}\partial_{25}-x_{18}\partial_{15}-x_{16}\partial_{13}-x_{14}\partial_{12}-x_{5}\partial_{4})\otimes id\end{array}$\\ \cline{3-3} 
&& $\begin{array}{l}(-x_{54}\partial_{55}-x_{45}\partial_{47}-x_{43}\partial_{46}-x_{41}\partial_{44}-x_{31}\partial_{34}-x_{1}\partial_{30}\partial_{32}-x_{29}\partial_{32}-x_{27}\partial_{30}-x_{25}\partial_{28}-x_{15}\partial_{18}-x_{13}\partial_{16}-x_{12}\partial_{14}-x_{4}\partial_{5})\otimes id\end{array}$\\ \cline{3-3} 
&& $\begin{array}{l}(-x_{56}\partial_{55}-x_{45}\partial_{42}-x_{43}\partial_{40}-x_{41}\partial_{37}-x_{38}\partial_{34}-x_{35}\partial_{32}-x_{27}\partial_{24}-x_{25}\partial_{21}-x_{22}\partial_{18}-x_{19}\partial_{16}-x_{17}\partial_{14}-x_{4}\partial_{3})\otimes id\end{array}$\\ \cline{3-3} 
&& $\begin{array}{l}(-x_{55}\partial_{56}-x_{42}\partial_{45}-x_{40}\partial_{43}-x_{37}\partial_{41}-x_{34}\partial_{38}-x_{32}\partial_{35}-x_{24}\partial_{27}-x_{21}\partial_{25}-x_{18}\partial_{22}-x_{16}\partial_{19}-x_{14}\partial_{17}-x_{3}\partial_{4})\otimes id\end{array}$\\ \cline{3-3} 
&& $\begin{array}{l}(-x_{57}\partial_{56}-x_{42}\partial_{39}-x_{40}\partial_{36}-x_{37}\partial_{33}-x_{34}\partial_{30}-x_{1}\partial_{27}\partial_{28}-x_{32}\partial_{28}-x_{31}\partial_{27}-x_{29}\partial_{25}-x_{26}\partial_{22}-x_{23}\partial_{19}-x_{20}\partial_{17}-x_{3}\partial_{2})\otimes id\end{array}$\\ \cline{3-3} 
&& $\begin{array}{l}(-x_{56}\partial_{57}-x_{39}\partial_{42}-x_{36}\partial_{40}-x_{33}\partial_{37}-x_{30}\partial_{34}-x_{1}\partial_{31}\partial_{32}-x_{28}\partial_{32}-x_{27}\partial_{31}-x_{25}\partial_{29}-x_{22}\partial_{26}-x_{19}\partial_{23}-x_{17}\partial_{20}-x_{2}\partial_{3})\otimes id\end{array}$\\ \cline{3-3} 
&& $\begin{array}{l}(-x_{57}\partial_{57}^{2}-x_{56}\partial_{56}\partial_{57}-x_{55}\partial_{55}\partial_{57}-x_{54}\partial_{54}\partial_{57}-x_{53}\partial_{53}\partial_{57}-x_{52}\partial_{52}\partial_{57}-x_{51}\partial_{51}\partial_{57}-x_{50}\partial_{50}\partial_{57}-x_{49}\partial_{49}\partial_{57}-x_{48}\partial_{48}\partial_{57} \\ -x_{47}\partial_{47}\partial_{57}-x_{46}\partial_{46}\partial_{57}-x_{45}\partial_{45}\partial_{57}-x_{44}\partial_{44}\partial_{57}-x_{43}\partial_{43}\partial_{57}-x_{42}\partial_{42}\partial_{57}-x_{41}\partial_{41}\partial_{57}-x_{40}\partial_{40}\partial_{57}+x_{1}\partial_{20}\partial_{39}\partial_{57} \\ -x_{38}\partial_{38}\partial_{57}-x_{37}\partial_{37}\partial_{57}-x_{1}\partial_{23}\partial_{36}\partial_{57}-x_{35}\partial_{35}\partial_{57}-x_{34}\partial_{34}\partial_{57}+x_{1}\partial_{26}\partial_{33}\partial_{57}-x_{32}\partial_{32}\partial_{57}-x_{31}\partial_{31}\partial_{57}-x_{1}\partial_{29}\partial_{30}\partial_{57} \\ -x_{29}\partial_{29}\partial_{57}-x_{26}\partial_{26}\partial_{57}-x_{23}\partial_{23}\partial_{57}-x_{20}\partial_{20}\partial_{57}-x_{1}\partial_{20}\partial_{42}\partial_{56}-x_{39}\partial_{42}\partial_{56}+x_{1}\partial_{23}\partial_{40}\partial_{56}-x_{36}\partial_{40}\partial_{56}-x_{1}\partial_{26}\partial_{37}\partial_{56} \\ -x_{33}\partial_{37}\partial_{56}+x_{1}\partial_{29}\partial_{34}\partial_{56}-x_{30}\partial_{34}\partial_{56}-2x_{1}\partial_{31}\partial_{32}\partial_{56}-x_{28}\partial_{32}\partial_{56}-x_{27}\partial_{31}\partial_{56}-x_{25}\partial_{29}\partial_{56}-x_{22}\partial_{26}\partial_{56}-x_{19}\partial_{23}\partial_{56} \\ -x_{17}\partial_{20}\partial_{56}+x_{1}\partial_{20}\partial_{45}\partial_{55}+x_{39}\partial_{45}\partial_{55}-x_{1}\partial_{23}\partial_{43}\partial_{55}+x_{36}\partial_{43}\partial_{55}+x_{1}\partial_{26}\partial_{41}\partial_{55}+x_{33}\partial_{41}\partial_{55}-x_{1}\partial_{29}\partial_{38}\partial_{55} \\ +x_{30}\partial_{38}\partial_{55}+2x_{1}\partial_{31}\partial_{35}\partial_{55}+x_{28}\partial_{35}\partial_{55}-x_{24}\partial_{31}\partial_{55}-x_{21}\partial_{29}\partial_{55}-x_{18}\partial_{26}\partial_{55}-x_{16}\partial_{23}\partial_{55}-x_{14}\partial_{20}\partial_{55}-x_{1}\partial_{20}\partial_{47}\partial_{54} \\ -x_{39}\partial_{47}\partial_{54}+x_{1}\partial_{23}\partial_{46}\partial_{54}-x_{36}\partial_{46}\partial_{54}-x_{1}\partial_{26}\partial_{44}\partial_{54}-x_{33}\partial_{44}\partial_{54}+2x_{1}\partial_{32}\partial_{38}\partial_{54}+x_{27}\partial_{38}\partial_{54}-2x_{1}\partial_{34}\partial_{35}\partial_{54} \\ +x_{25}\partial_{35}\partial_{54}+x_{24}\partial_{34}\partial_{54}+x_{21}\partial_{32}\partial_{54}-x_{15}\partial_{26}\partial_{54}-x_{13}\partial_{23}\partial_{54}-x_{12}\partial_{20}\partial_{54}+x_{1}\partial_{20}\partial_{49}\partial_{53}+x_{39}\partial_{49}\partial_{53}-x_{1}\partial_{23}\partial_{48}\partial_{53} \\ +x_{36}\partial_{48}\partial_{53}+x_{1}\partial_{29}\partial_{44}\partial_{53}-x_{30}\partial_{44}\partial_{53}-2x_{1}\partial_{32}\partial_{41}\partial_{53}-x_{27}\partial_{41}\partial_{53}+2x_{1}\partial_{35}\partial_{37}\partial_{53}-x_{24}\partial_{37}\partial_{53}+x_{22}\partial_{35}\partial_{53} \\ +x_{18}\partial_{32}\partial_{53}+x_{15}\partial_{29}\partial_{53}-x_{11}\partial_{23}\partial_{53}-x_{10}\partial_{20}\partial_{53}-x_{1}\partial_{20}\partial_{51}\partial_{52}-x_{39}\partial_{51}\partial_{52}+x_{1}\partial_{23}\partial_{50}\partial_{52}-x_{36}\partial_{50}\partial_{52}-2x_{1}\partial_{31}\partial_{44}\partial_{52} \\ -x_{28}\partial_{44}\partial_{52}+2x_{1}\partial_{34}\partial_{41}\partial_{52}-x_{25}\partial_{41}\partial_{52}-2x_{1}\partial_{37}\partial_{38}\partial_{52}-x_{22}\partial_{38}\partial_{52}-x_{21}\partial_{37}\partial_{52}-x_{18}\partial_{34}\partial_{52}-x_{15}\partial_{31}\partial_{52}-x_{9}\partial_{23}\partial_{52} \\ -x_{8}\partial_{20}\partial_{52}+x_{1}\partial_{26}\partial_{48}\partial_{51}+x_{33}\partial_{48}\partial_{51}-x_{1}\partial_{29}\partial_{46}\partial_{51}+x_{30}\partial_{46}\partial_{51}+2x_{1}\partial_{32}\partial_{43}\partial_{51}+x_{27}\partial_{43}\partial_{51}-2x_{1}\partial_{35}\partial_{40}\partial_{51} \\ +x_{24}\partial_{40}\partial_{51}+x_{19}\partial_{35}\partial_{51}+x_{16}\partial_{32}\partial_{51}+x_{13}\partial_{29}\partial_{51}+x_{11}\partial_{26}\partial_{51}-x_{7}\partial_{20}\partial_{51}-x_{1}\partial_{26}\partial_{49}\partial_{50}-x_{33}\partial_{49}\partial_{50}+x_{1}\partial_{29}\partial_{47}\partial_{50} \\ -x_{30}\partial_{47}\partial_{50}-2x_{1}\partial_{32}\partial_{45}\partial_{50}-x_{27}\partial_{45}\partial_{50}+2x_{1}\partial_{35}\partial_{42}\partial_{50}-x_{24}\partial_{42}\partial_{50}+x_{17}\partial_{35}\partial_{50}+x_{14}\partial_{32}\partial_{50}+x_{12}\partial_{29}\partial_{50}+x_{10}\partial_{26}\partial_{50} \\ +x_{7}\partial_{23}\partial_{50}+2x_{1}\partial_{31}\partial_{46}\partial_{49}+x_{28}\partial_{46}\partial_{49}-2x_{1}\partial_{34}\partial_{43}\partial_{49}+x_{25}\partial_{43}\partial_{49}+2x_{1}\partial_{38}\partial_{40}\partial_{49}+x_{21}\partial_{40}\partial_{49}-x_{19}\partial_{38}\partial_{49} \\ -x_{16}\partial_{34}\partial_{49}-x_{13}\partial_{31}\partial_{49}+x_{9}\partial_{26}\partial_{49}-x_{6}\partial_{20}\partial_{49}-2x_{1}\partial_{31}\partial_{47}\partial_{48}-x_{28}\partial_{47}\partial_{48}+2x_{1}\partial_{34}\partial_{45}\partial_{48}-x_{25}\partial_{45}\partial_{48}-2x_{1}\partial_{38}\partial_{42}\partial_{48} \\ -x_{21}\partial_{42}\partial_{48}-x_{17}\partial_{38}\partial_{48}-x_{14}\partial_{34}\partial_{48}-x_{12}\partial_{31}\partial_{48}+x_{8}\partial_{26}\partial_{48}+x_{6}\partial_{23}\partial_{48}+2x_{1}\partial_{37}\partial_{43}\partial_{47}+x_{22}\partial_{43}\partial_{47}-2x_{1}\partial_{40}\partial_{41}\partial_{47} \\ +x_{19}\partial_{41}\partial_{47}+x_{18}\partial_{40}\partial_{47}+x_{16}\partial_{37}\partial_{47}-x_{11}\partial_{31}\partial_{47}-x_{9}\partial_{29}\partial_{47}-x_{5}\partial_{20}\partial_{47}-2x_{1}\partial_{37}\partial_{45}\partial_{46}-x_{22}\partial_{45}\partial_{46}+2x_{1}\partial_{41}\partial_{42}\partial_{46} \\ -x_{18}\partial_{42}\partial_{46}+x_{17}\partial_{41}\partial_{46}+x_{14}\partial_{37}\partial_{46}-x_{10}\partial_{31}\partial_{46}-x_{8}\partial_{29}\partial_{46}+x_{5}\partial_{23}\partial_{46}+2x_{1}\partial_{40}\partial_{44}\partial_{45}-x_{19}\partial_{44}\partial_{45}+x_{15}\partial_{40}\partial_{45} \\ +x_{13}\partial_{37}\partial_{45}+x_{11}\partial_{34}\partial_{45}+x_{9}\partial_{32}\partial_{45}-x_{4}\partial_{20}\partial_{45}-2x_{1}\partial_{42}\partial_{43}\partial_{44}-x_{17}\partial_{43}\partial_{44}-x_{16}\partial_{42}\partial_{44}-x_{14}\partial_{40}\partial_{44}-x_{7}\partial_{31}\partial_{44} \\ -x_{6}\partial_{29}\partial_{44}-x_{5}\partial_{26}\partial_{44}-x_{15}\partial_{42}\partial_{43}+x_{12}\partial_{37}\partial_{43}+x_{10}\partial_{34}\partial_{43}+x_{8}\partial_{32}\partial_{43}+x_{4}\partial_{23}\partial_{43}-x_{13}\partial_{41}\partial_{42}-x_{11}\partial_{38}\partial_{42}-x_{9}\partial_{35}\partial_{42} \\ -x_{3}\partial_{20}\partial_{42}-x_{12}\partial_{40}\partial_{41}+x_{7}\partial_{34}\partial_{41}+x_{6}\partial_{32}\partial_{41}-x_{4}\partial_{26}\partial_{41}-x_{10}\partial_{38}\partial_{40}-x_{8}\partial_{35}\partial_{40}+x_{3}\partial_{23}\partial_{40}-x_{2}\partial_{20}\partial_{39}-x_{7}\partial_{37}\partial_{38} \\ +x_{5}\partial_{32}\partial_{38}+x_{4}\partial_{29}\partial_{38}-x_{6}\partial_{35}\partial_{37}-x_{3}\partial_{26}\partial_{37}+x_{2}\partial_{23}\partial_{36}-x_{5}\partial_{34}\partial_{35}-x_{4}\partial_{31}\partial_{35}+x_{3}\partial_{29}\partial_{34}-x_{2}\partial_{26}\partial_{33}-x_{3}\partial_{31}\partial_{32} \\ +x_{2}\partial_{29}\partial_{30}-x_{2}\partial_{1})\otimes id\end{array}$\\ \cline{3-3} 
&& $\begin{array}{l}(-x_{1}\partial_{2}+x_{57})\otimes id\end{array}$\\ \hline 
\end{longtable}

\begin{table}\caption{\label{tableGtwoStructure} The Lie bracket pairing table of $G_2$. Generators are indexed according to root spaces, and root spaces are indexed according to graded lexicographic order. The first simple root is short. }
\begin{tiny}
\[\begin{array}{cc|ccccccccccccccc}
\mathrm{roots}&\varepsilon-\mathrm{~notation}&[\bullet, \bullet]
& g_{-6} & g_{-5} & g_{-4} & g_{-3} & g_{-2} & g_{-1} & h_{1} & h_{2} & g_{1} & g_{2} & g_{3} & g_{4} & g_{5} & g_{6}\\
(-3, -2)&\varepsilon_{1}+\varepsilon_{2}-2\varepsilon_{3}&g_{-6}& 0& 0& 0& 0& 0& 0& 0& 3g_{-6}& 0& g_{-5}& -g_{-4}& g_{-3}& -g_{-2}& -h_{1}-2/3h_{2}\\
(-3, -1)&-\varepsilon_{1}+2\varepsilon_{2}-\varepsilon_{3}&g_{-5}& 0& 0& 0& 0& g_{-6}& 0& 3g_{-5}& -3g_{-5}& g_{-4}& 0& 0& -g_{-1}& -h_{1}-1/3h_{2}& -g_{2}\\
(-2, -1)&\varepsilon_{2}-\varepsilon_{3}&g_{-4}& 0& 0& 0& -3g_{-6}& 0& 3g_{-5}& g_{-4}& 0& 2g_{-3}& 0& -2g_{-1}& -2h_{1}-h_{2}& -g_{1}& g_{3}\\
(-1, -1)&\varepsilon_{1}-\varepsilon_{3}&g_{-3}& 0& 0& 3g_{-6}& 0& 0& 2g_{-4}& -g_{-3}& 3g_{-3}& 3g_{-2}& -g_{-1}& -h_{1}-h_{2}& -2g_{1}& 0& -g_{4}\\
(0, -1)&2\varepsilon_{1}-\varepsilon_{2}-\varepsilon_{3}&g_{-2}& 0& -g_{-6}& 0& 0& 0& g_{-3}& -3g_{-2}& 6g_{-2}& 0& -1/3h_{2}& -g_{1}& 0& 0& g_{5}\\
(-1, 0)&-\varepsilon_{1}+\varepsilon_{2}&g_{-1}& 0& 0& -3g_{-5}& -2g_{-4}& -g_{-3}& 0& 2g_{-1}& -3g_{-1}& -h_{1}& 0& 3g_{2}& 2g_{3}& g_{4}& 0\\
(0, 0)&0&h_{1}& 0& -3g_{-5}& -g_{-4}& g_{-3}& 3g_{-2}& -2g_{-1}& 0& 0& 2g_{1}& -3g_{2}& -g_{3}& g_{4}& 3g_{5}& 0\\
(0, 0)&0&h_{2}& -3g_{-6}& 3g_{-5}& 0& -3g_{-3}& -6g_{-2}& 3g_{-1}& 0& 0& -3g_{1}& 6g_{2}& 3g_{3}& 0& -3g_{5}& 3g_{6}\\
(1, 0)&\varepsilon_{1}-\varepsilon_{2}&g_{1}& 0& -g_{-4}& -2g_{-3}& -3g_{-2}& 0& h_{1}& -2g_{1}& 3g_{1}& 0& g_{3}& 2g_{4}& 3g_{5}& 0& 0\\
(0, 1)&-2\varepsilon_{1}+\varepsilon_{2}+\varepsilon_{3}&g_{2}& -g_{-5}& 0& 0& g_{-1}& 1/3h_{2}& 0& 3g_{2}& -6g_{2}& -g_{3}& 0& 0& 0& g_{6}& 0\\
(1, 1)&-\varepsilon_{1}+\varepsilon_{3}&g_{3}& g_{-4}& 0& 2g_{-1}& h_{1}+h_{2}& g_{1}& -3g_{2}& g_{3}& -3g_{3}& -2g_{4}& 0& 0& -3g_{6}& 0& 0\\
(2, 1)&-\varepsilon_{2}+\varepsilon_{3}&g_{4}& -g_{-3}& g_{-1}& 2h_{1}+h_{2}& 2g_{1}& 0& -2g_{3}& -g_{4}& 0& -3g_{5}& 0& 3g_{6}& 0& 0& 0\\
(3, 1)&\varepsilon_{1}-2\varepsilon_{2}+\varepsilon_{3}&g_{5}& g_{-2}& h_{1}+1/3h_{2}& g_{1}& 0& 0& -g_{4}& -3g_{5}& 3g_{5}& 0& -g_{6}& 0& 0& 0& 0\\
(3, 2)&-\varepsilon_{1}-\varepsilon_{2}+2\varepsilon_{3}&g_{6}& h_{1}+2/3h_{2}& g_{2}& -g_{3}& g_{4}& -g_{5}& 0& 0& -3g_{6}& 0& 0& 0& 0& 0& 0\\
\end{array}
\]
\end{tiny}
\end{table}
\end{landscape}
\nocite{Dixmier}
\bibliographystyle{plain}

\end{document}